\documentclass[12pt]{amsart}
\usepackage{hyperref}
\usepackage{amsfonts}
\usepackage{bbm}
\usepackage{esint}
\usepackage{mathrsfs}
\usepackage{amssymb,amsmath,amsfonts,amsthm,color}
\usepackage{setspace}
\numberwithin{equation}{section}

\DeclareMathOperator{\supp}{supp}

\allowdisplaybreaks

\theoremstyle{plain}
\newtheorem{theorem}{Theorem}[section]

\newtheorem{lemma}[theorem]{Lemma}
\newtheorem{corollary}[theorem]{Corollary}

\theoremstyle{definition}
\newtheorem{definition}[theorem]{Definition}
\newtheorem{remark}[theorem]{Remark}

\allowdisplaybreaks

\newcommand{\e}{\varepsilon}

\newcommand{\R}{\mathbb{R}}

\def\R{\mathbb R}

\def\12{\frac{1}{2}}

\def\b1 {\dot{B}_1^{-\alpha,1}}

\begin{document}

\setcounter{tocdepth}{3} \allowdisplaybreaks

\title[Variable Hardy spaces in the Dunkl Setting ]
{Variable Hardy spaces in the Dunkl Setting
}

\author[J Tan]{Jian Tan}

\subjclass[2020]{Primary 42B35; Secondary 43A85, 42B25, 42B30}

\keywords{Dunkl--Calder\'on--Zygmund singular integral, Littlewood--Paley theory, Dunkl--Hardy space, variable exponents}

\begin{abstract}
In this paper, we aim to define a new variable Hardy spaces in the Dunkl setting by the discrete Littlewood--Paley square functions. We also consider the atomic decomposition characterization for the new variable Hardy spaces with the help of the Littlewood--Paley theory and Hardy spaces on spaces of homogeneous type in the sense of Coifman and Weiss. As applications, we obtain the variable Hardy spaces boundedness for Dunkl--Calder\'on--Zygmund singular integrals involving by the Euclidean metric and the Dunkl ``metric'' which is associated with the finite reflection groups on the Euclidean space. 
\end{abstract}
\maketitle

\section{\bf{Introduction and statement of main results}}

Let $R$ be a normalized root system in $\mathbb R^n$ so that $\langle\alpha, \alpha\rangle=2$ for $\alpha\in R$ and $R^+$ be a fixed positive subsystem, where 
$\langle \cdot, \cdot\rangle$ 
stands for the usual scalar product on $\mathbb R^n$. 
We denote by $G$ the finite reflection group generated by the reflections $\sigma_\alpha$ for all $\alpha\in R$, 
where $\sigma_\alpha(x)=x-\langle x,\alpha\rangle\alpha$
for $x\in \mathbb R^n$.
A multiplicity function $\kappa(\alpha)\ge 0$ is invariant under the natural action of $G$ defined on $R$, 
which will be fixed throughout this paper. For $\xi\in R^n$,
the Dunkl operators $T_\xi$ are defined by
$$T_\xi f(x)=\partial_\xi f(x) + \sum\limits_{\alpha\in R}\frac{\kappa(\alpha)}{2}\langle\alpha, \xi\rangle\frac{f(x)-f(\sigma_\alpha(x))}{\langle \alpha, x\rangle}.$$
We denote that $T_j=T_{e_j}$,
where $e_1, \cdots, e_n$ are the standard unit vectors of $\mathbb R^n.$
The Dunkl measure $dw(x)$ is defined by
\begin{eqnarray*}
dw(x)=\prod_{\alpha\in R}|\langle\alpha,x\rangle|^{\kappa(\alpha)}dx=\prod_{\alpha\in R^+}|\langle\alpha,x\rangle|^{2\kappa(\alpha)}dx
\end{eqnarray*}
be the associated measure in $\mathbb{R}^{n}$, where $dx$ stands for the Lebesgue
measure in $\mathbb{R}^{n}$. 
\noindent We denote by $L^p(\R^n,dw)$, $p\in[1,\infty]$, the space of measurable functions on
$\mathbb R^n$ such that
$$
\|f\|_{L^p(\R^n,dw)}=\left(\int_{\mathbb R^N}|f(x)|^pdw(x)\right)^{1/p}<\infty,\;1\le p<\infty,
$$
$$
\|f\|_{L^\infty(\R^n,dw)}=\mbox{ess}\sup_{x\in \mathbb R^n}|f(x)|<\infty.
$$
For fixed $y\in R^n$, the Dunkl kernel $x\rightarrow E(x,y)$ is a unique analytic solution to the system
$$T_\xi f=\big<\xi,y\big>f,\quad f(0)=1$$
for all $\xi\in\mathbb R^n$.
In \cite{D2}, Dunkl introduced the Dunkl transform, which enjoys properties
similar to the classical Fourier transform. The transform is
defined by
$$ \widehat{f}(x)=c_h\int_{\mathbb{R}^{n}} E(x, -iy)f(y)h^2_{\kappa}(y)dy,$$
where the usual character $e^{-i\langle x,y\rangle}$ is replaced by
$E(x, -iy)=V_{\kappa}(e^{-i\langle \cdot,y\rangle})(x)$ for some
positive linear operator $V_{\kappa}$ and the weight functions
$h_{\kappa}$ are invariant under a finite reflection group $G$ on
$\mathbb{R}^{n}$. Particularly, the Dunkl transform also
satisfies the Plancherel identity, namely, $\|\widehat{f}\|_2=\|f\|_2$
and  if the parameter $\kappa=0,$ then $h_{\kappa}(x)=1$ and
$V_{\kappa}$ is the identity operator, thus the Dunkl transform reduces to the classical
Fourier transform automatically. In \cite{TX1}, the translation
operator related to Dunkl transform then is defined on the Dunkl
transform side by
$$ {\widehat{\tau_y f}}(x)=E(y, -ix) \hat{f}(x)$$
for all $x\in \mathbb{R}^{n}.$ When the function $f$ is in the Schwartz class, the above equality holds pointwise. As an operator on $L^2(\mathbb R^n,h^2_{\kappa}),$ $\tau_y$ is bounded. However, it is not at all clear whether the translation operator can be defined for $L^p$ functions with
$p\not=2.$ For $f, g\in L^2(\mathbb{R}^{n},h^2_{\kappa}),$ their convolution
can be defined in terms of the translation operator by
$$f{\ast}_{\kappa} g(x)=\int_{\mathbb{R}^{n}}f(y)\tau_{x}\widetilde g(y)h^2_{\kappa}(y)dy,$$
where $\widetilde g(y)=g(-y).$ 
However, in the Dunkl setting, there are the Dunkl transform,  Dunkl translations and Dunkl convolution conducted by finite reflection groups,
which is different from the classical one. 

Note that the Dunkl-Riesz transforms are defined by $$R_j(f)=-T_j(\triangle)^{-1/2}f=-c\int^\infty_0 T_je^{t\triangle}f\frac{dt}{\sqrt t}$$ and $e^{t\triangle}f(x)=\int_{\R^N} h_t(x,y)f(y)d\omega(y),$ where the integral converges in $L^2$ and $h_t(x,y)$ is the heat kernel.  
Very recently, C-Q. Tan et al. \cite{THHLL1} introduced the Dunkl--Calder\'on--Zygmund singular integral operators with the kernels involving both the Euclidean metric and the Dunkl metric deduced by the finite reflection groups. These new class of singular integrals covers the well-known Dunk--Riesz transforms and generalizes the classical Calder\'on--Zygmund singular integrals on spaces of homogeneous type. Furthermore, 
the  $T1$ theorem, Littlewood--Paley theory, Dunkl--Hardy spaces and the boundedness of such singular integrals on the Dunkl--Hardy spaces are also established. 

On the other hand, due to many applications to 
elasticity, fluid dynamics, calculus of variations, and differential equations with 
non-standard growth condition,
the variable exponent function spaces,
such as the variable Lebesgue spaces and the variable
Sobolev spaces, were studied by a substantial number
of researchers (see \cite{CF,DHHR2011} and references therein).
In $\mathbb R^n$,
for any variable exponent function $p(x)$ fulfilling
$$1\le p^-:=\mathrm{ess}\inf \{p(x):x\in\R^n\}\le p(x)\le p^+:=\mathrm{ess}\sup \{p(x):x\in\R^n\}<\infty,$$
the variable Lebesgue space $L^{p(\cdot)}(\mathbb R^n,dx)$ associated with Lebesgue measure $dx$ is defined to be the set of all measurable functions $f(x)$ defined on $\mathbb R^n$ such that
$$\int_{\mathbb R^n} |f(x)|^{p(x)}d x<\infty,$$
equipped with the Luxemburg--Nakano norm
$$\|f\|_{L^{p(\cdot)}(\mathbb R^n, dx)}
=\inf\left\{\lambda>0: \int_{\mathbb R^n} \left|\frac{f(x)}{\lambda}\right|^{p(x)}d x\le1 \right\}.$$
Obviously, when $p(x)=p$, the Lebesgue space with variable exponents $L^{p(\cdot)}(\mathbb R^n, dx)$ is reduced to the classical one $L^p(\mathbb R^n, dx)$.
As a special case of the theory of Nakano and Luxemberg,
we see that $L^{p(\cdot)}(\mathbb R^n, dx)$
is a quasi-normed space. Especially, when $p^-\geq1$, $L^{p(\cdot)}(\R^n, dx)$ is a Banach space.

In \cite{TT}, the first author and Tao have introduced the Dunkl--Lebesgue spaces with variable exponents and proved that the Dunkl--Calder\'on--Zygmund operator is bounded on these new Lebesgue spaces with variable exponents by using the pointwise estimates for Dunkl sharp function associated with Dunkl--Calder\'on--Zygmund operator. 
Thus, it is natural to ask the following question: {\bf Is the Dunkl--Calder\'on--Zygmund operator also bounded on the corresponding Hardy spaces with variable exponents?} 

The purpose of this paper is to give an {\bf affirmative answer} to this question.
To achieve it, we will introduced the new variable Dunkl--Hardy spaces via using the Littlewood--Paley theory.
Then we will give the atomic decomposition characterizations for these new Hardy spaces. Lastly, the boundedness of Dunkl--Calder\'on--Zygmund operator is obtained with the help of the atomic decomposition theory and the method of variable exponent analysis.

To describe our main results of  the paper, first we recall the weak-type wavelet decomposition on $L^2(\mathbb R^n)$ established in \cite{THHLL1}. 
Applying the following Calder\'on reproducing formula given in \cite{ADH},

\begin{equation}\label{ccrf}
f(x)=\int_0^\infty {\psi_{t}}\ast {q_{t}}\ast
f(x)\frac{dt}{t},
\end{equation}
where the integral converges in $L^2(\mathbb R^n)$ and ${q_t}f=t\frac{\partial}{\partial t}p_tf$ with $p_t$ is
the Dunkl--Poisson kernel, and ${\psi}\in C^\infty_0(B_d(0,1/4))$
is a radial function with $\int_{\mathbb R^nN}{\psi}(x)d\omega(x)=0$.
the authors in \cite{THHLL1} decompose $f$ by the following

$$f(x)=\int_0^\infty {\psi}_t\ast {q_t}\ast f(x)\frac{dt}{t}=T_M(f)(x) + R_1(f)(x) + R_M(f)(x),$$
where
$$T_M(f)(x)=-\ln r \cdot\sum\limits_{j=-\infty}^\infty\sum\limits_{Q\in {\mathcal Q}^j}w(Q) \psi_{j}(x,x_{Q})q_{j}\ast
f(x_{Q}),$$
$$R_1(f)(x)=-\sum\limits_{j=-\infty}^\infty\int_{r^{-j}}^{r^{-j+1}}
\Big[{\psi_t}\ast {q_t}\ast f(x)- \psi_{j}\ast q_{j}\ast
f(x)\Big]\frac{dt}{t}$$ and
$$R_M(f)(x)=-\ln r \cdot\sum\limits_{j=-\infty}^\infty\sum\limits_{Q\in {\mathcal Q}^j}\int_{Q}
\Big[\psi_{j}(x,y)q_{j}\ast f(y)-\psi_{j}(x,x_{Q})q_{j}\ast
f(x_{Q})\Big]d\omega(y),$$ where $\psi_j={\psi}_{r^j},
q_{j}={q}_{r^j}$ with $1<r\leqslant r_0$ for some fixed $r_0,
{\mathcal Q}^j$ is the collection of all ``$r$-dyadic cubes'' $Q$
with the side length $r^{-M-j}$ for $M$ is some fixed large integer,
and $x_{Q}$ is any fixed point in the cube $Q.$
By applying Coifman's decomposition of the identity on $L^2(\mathbb R^n)$
gives
$$I=T_M + R_1 + R_M.$$
The authors in \cite{THHLL1} showed that $R_1$ and $R_M$ are bounded on $L^p(\mathbb R^n, d\omega), 1<p<\infty,$ and moreover, $\|R_1+R_M\|_{p,p}< 1$ with  some a fixed $r_0>1$ and a fixed $M.$ This implies that $(T_M)^{-1},$ the inverse
of $T_M,$ exists and is bounded on $L^p(\mathbb R^n, d\omega), 1<p<\infty.$
Then the weak-type wavelet decomposition is given by the following theorem.

\begin{theorem}[\cite{THHLL1}]\label{th1.1}
    If $f\in L^2(\mathbb R^n,d\omega)$ then there exists a function $h\in L^2(\mathbb R^n,d\omega)$ such that $\|f\|_2\sim \|h\|_2$ and 
    \begin{align*}
    f(x)=\sum\limits_{j=-\infty}^\infty\sum\limits_{Q\in {\mathcal Q}^j}\omega(Q)
    \psi_{Q}(x,x_{Q})q_{Q}
    h(x_{Q}),
    \end{align*}
    where the series converges in $L^2(\mathbb R^n,d\omega)$ with $\psi_Q=\psi_{j}, q_{Q}=q_{j}$ when $Q\in {\mathcal Q}^j$ where $\{{\mathcal Q}^j\}$ is the collection of all ``r-dyadic cubes'' $Q$
     with the side length $r^{-M-j}$ for $M$ is some fixed large integer and $x_{Q}$ is any fixed point in the cube $Q.$
\end{theorem}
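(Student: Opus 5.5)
The plan is to invert the operator $T_M$ from Coifman's decomposition of the identity. The decomposition $I=T_M+R_1+R_M$ is already set up on $L^2(\mathbb R^n,d\omega)$, and the remainders are known to be small there. Given $f\in L^2(\mathbb R^n,d\omega)$, I will set
$$h=-\ln r\cdot (T_M)^{-1}f,$$
up to the normalizing constant $-\ln r$. Then I will check that $f=T_M\big((T_M)^{-1}f\big)$ is exactly the claimed series. The key steps, in order, are as follows.

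\emph{Step 1: invertibility on $L^2$.} By the result of \cite{THHLL1} quoted above, $\|R_1+R_M\|_{2,2}<1$ for a suitable fixed $r_0$ and a large fixed $M$; here $p=2$ is one of the admissible exponents $1<p<\infty$. Hence $T_M=I-(R_1+R_M)$ is invertible through the Neumann series
$$(T_M)^{-1}=\sum_{k\ge0}(R_1+R_M)^k.$$
This gives
$$\|(T_M)^{-1}\|_{2,2}\le \big(1-\|R_1+R_M\|_{2,2}\big)^{-1}.$$
Since also $\|T_M\|_{2,2}\le 1+\|R_1+R_M\|_{2,2}\le 2$, we obtain $\|h\|_2\le C\|f\|_2$ and $\|f\|_2=\|T_M(T_M)^{-1}f\|_2\le C\|h\|_2$. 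This proves $\|f\|_2\sim\|h\|_2$.

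\emph{Step 2: the representation.} Apply the definition of $T_M$ to $g=(T_M)^{-1}f$:
$$f=T_Mg=-\ln r\sum_j\sum_{Q\in\mathcal Q^j}\omega(Q)\,\psi_j(x,x_Q)\,q_j\ast g(x_Q).$$
Absorbing $-\ln r$ into $h$ and writing $q_Qh(x_Q)=q_j\ast h(x_Q)$ and $\psi_Q=\psi_j$ for $Q\in\mathcal Q^j$ gives exactly the displayed series in Theorem~\ref{th1.1}.

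\emph{Step 3: convergence in $L^2$.} This is the main obstacle. $T_M$ is defined by a doubly infinite series, so one must show that its partial sums converge in $L^2(\mathbb R^n,d\omega)$ to an $L^2$-bounded operator, rather than just treating $T_M$ formally.

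I would handle Step 3 by duality. For finite index sets $F$, pair the partial sum with $u\in L^2$. The pairing becomes
$$\sum_{(j,Q)\in F}\omega(Q)\,q_j g(x_Q)\,\overline{\psi_j^{*}u(x_Q)}.$$
Cauchy--Schwarz bounds this by the product of two discrete square functions,
$$\Big(\sum_{(j,Q)\in F}\omega(Q)|q_jg(x_Q)|^2\Big)^{1/2}\Big(\sum_{(j,Q)}\omega(Q)|\psi_j u(x_Q)|^2\Big)^{1/2}.$$
Each factor will be controlled in $L^2$ by discrete Littlewood--Paley estimates. These come from the size and smoothness of the Dunkl--Poisson derivative kernel $q_t$ and of $\psi_t$ in terms of the Dunkl metric and $\omega(B(x,t))$, together with almost-orthogonality, i.e.\ Cotlar--Stein type bounds on the operators $q_jq_k^*$. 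This gives a uniform bound. Because the tails of $\sum_{j,Q}\omega(Q)|q_jg(x_Q)|^2$ tend to zero, the partial sums are Cauchy in $L^2$. In the write-up I would expect these discrete square-function estimates to be quoted from \cite{THHLL1}, since they are what underlies the bounds on $R_1$ and $R_M$.
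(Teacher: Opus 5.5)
Your proposal is correct and follows essentially the paper's route. Since $\|R_1+R_M\|_{2,2}<1$, the operator $T_M=I-(R_1+R_M)$ is invertible on $L^2(\mathbb R^n,d\omega)$ by a Neumann series, and setting $h=-\ln r\cdot (T_M)^{-1}f$ turns $f=T_M\big((T_M)^{-1}f\big)$ into exactly the stated series with $\|h\|_2\sim\|f\|_2$. Your Step 3, which proves unconditional $L^2$ convergence of the series by duality and discrete Littlewood--Paley bounds, only makes explicit what the paper takes for granted from \cite{THHLL1}; logically it should come before Step 1, since writing $T_M=I-(R_1+R_M)$ already presupposes that $T_M$ is bounded on $L^2$.
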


This weak-type wavelet decomposition leads to the following generalized discrete Littlewood--Paley function in  \cite{THHLL1}.
\begin{definition}\label{df1.2}
    For $f\in L^2(\mathbb R^n, d\omega), S(f),$ the \emph{discrete Littlewood--Paley square function} of $f,$ is defined by
    \begin{eqnarray}\label{square_function}
    S(f)(x):= \left\{
    \sum\limits_{j=-\infty}^\infty\sum\limits_{Q\in {\mathcal Q}^j}|q_{Q}f(x_{Q})|)^2\chi_{Q}
    (x) \right\}^{1/2},
    \end{eqnarray}
    where $\chi_{Q}(x)$ is the characteristic function of the cube $Q.$
\end{definition}

For our purpose, we first introduce the definition of the variable Dunkl--Hardy space.
In what follows, a measurable function $p(\cdot): \mathbb R\to (0,\infty)$ is said to be a variable exponent function.
Denote by $\mathcal{P}^0=\mathcal{P}^0(\mathbb R^n)$ the set of all variable exponents $p(\cdot)$ on $\mathbb R^n$ with $0< p^-\le p(x)\le  p^+<\infty$. 
Denote by $\mathcal{P}=\mathcal{P}(\R^n)$ the set of all variable exponents $p(\cdot)$ on $\R^n$ with $1< p^-\le p(x)\le  p^+<\infty$.
We assume throughout that the variable exponent $p(\cdot)$ is $G$-invariant, i.e., $p(\sigma x)=p(x)$ for all $\sigma\in G$ and $x\in\mathbb{R}^N$.
For convenience, we defer other technical definitions to Section 2.

\begin{definition}\label{df1.3}
    For $p(\cdot)\in \mathcal P^0$,  the variable Dunkl--Hardy space $H_D^{p(\cdot)}{(\mathbb R^n)}$ is defined as the completion of
    $$
    \left\{f\in L^2(\mathbb R^n): S(f)\in L^{p(\cdot)}(\mathbb R^n, d\omega)\right\}
    $$
     under the norm
$\|f\|_{H_D^{p(\cdot)}{(\mathbb R^n)}}:=\|S(f)\|_{L^{p(\cdot)}}(\mathbb R^n, d\omega)$.
\end{definition}

\begin{remark}
By applying the nearly identical argument to the proof of \cite[Theorem 1.13]{THHLL2} together with the boundedness of Dunkl--Calder\'on--Zygmund operator on variable Dunkl--Lebesgue space, 
we can obtain the weak-type discrete Calder\'on reproducing formula for $f\in L^2(\mathbb R^n, d\omega)$ with respect to $H_D^{p(\cdot)}{(\mathbb R^n)}$. That is,
    if $f\in L^2(\mathbb R^n, d\omega)\cap H_D^{p(\cdot)}(\mathbb R^n, d\omega)$ along with $p(\cdot)\in LH$ and $\frac{N}{N+1}<p^-\le p^+\le 1$, then
    $$f=\sum\limits_{j=-\infty}^\infty\sum\limits_{Q\in {\mathcal Q}^j}w(Q)
    \psi_{Q}(x,x_{Q})q_{Q}h(x_{Q})$$
    in the $L^2(\mathbb R^n, d\omega)$ norm and the $H_D^{p(\cdot)}{(\mathbb R^n)}$ quasi-norm,
    where $\|h\|_{L^2}\sim \|f\|_{L^2}$ and 
    $\|h\|_{H_D^{p(\cdot)}}\sim \|f\|_{H_D^{p(\cdot)}}.$
\end{remark}
If $p(\cdot)\in \mathcal P\cap LH$, then the following lemma yields that $H_D^{p(\cdot)}{(\mathbb R^n)} \cong  L^{p(\cdot)}(\mathbb R^n, d\omega)$
with equivalent norms. 
The Littlewood--Paley characterization for variable Dunkl--Lebesgue spaces $L^{p(\cdot)}(\mathbb R^n, d\omega)$ is given as follows.
\begin{theorem}\label{LP}
Let $p(\cdot)\in \mathcal P\cap LH$.  Then there exist two positive constants $C_1$ and $C_2$ such that for $L^{p(\cdot)}(\mathbb R^n, d\omega)$,
    $$C_1\|f\|_{L^{p(\cdot)}(\mathbb R^n, d\omega)}\leqslant \|S(f)\|_{L^{p(\cdot)}(\mathbb R^n, d\omega)}\leqslant C_2\|f\|_{L^{p(\cdot)}(\mathbb R^n, d\omega)}.$$
\end{theorem}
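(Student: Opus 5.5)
The plan is to prove both inequalities by Rubio de Francia extrapolation on the space of homogeneous type $(\mathbb R^n,|\cdot|,d\omega)$. First, I would establish weighted bounds: for every $1<p_0<\infty$ and every weight $v\in A_{p_0}(d\omega)$, defined with respect to Euclidean balls and the doubling measure $\omega$,
$$\|S(f)\|_{L^{p_0}(v\,d\omega)}\lesssim\|f\|_{L^{p_0}(v\,d\omega)}, \qquad \|f\|_{L^{p_0}(v\,d\omega)}\lesssim\|S(f)\|_{L^{p_0}(v\,d\omega)},$$
with constants depending only on $[v]_{A_{p_0}}$. The variable exponent theorem then follows from the extrapolation theorem on spaces of homogeneous type (in the Cruz-Uribe--Fiorenza form adapted to doubling measures). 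The only input it needs is that the Hardy--Littlewood maximal operator $M_\omega$ is bounded on $L^{p(\cdot)}(d\omega)$ and on its associate space $L^{p'(\cdot)}(d\omega)$. For $p(\cdot)\in\mathcal P\cap LH$ this is available from the work on variable Dunkl--Lebesgue spaces \cite{TT}; the $G$-invariance of $p(\cdot)$ and the doubling of $\omega$ are exactly what is used there.

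\emph{Upper bound.} For the weighted upper bound I would write $q_jf(x_Q)=\int q_j(x_Q,y)f(y)\,d\omega(y)$. I would then use the size, smoothness and cancellation estimates of the Dunkl--Poisson derivative kernel from \cite{THHLL1}. These control $|q_j(x,y)|$ by $\frac{1}{V(x,y,d(x,y))}\big(\frac{t}{t+|x-y|}\big)^{\epsilon}$-type bounds, where $d$ is the orbit distance; this mixes the Euclidean metric and the Dunkl metric. Two routes then apply.
\begin{itemize}
\item[(a)] Consider the vector-valued operator $f\mapsto\{q_Qf(x_Q)\chi_Q\}_{Q}$ with values in $\ell^2$. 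It is bounded on $L^2(d\omega)$ by the discrete Littlewood--Paley estimate of \cite{THHLL1}. Its kernel is a vector-valued Dunkl--Calder\'on--Zygmund kernel, so the weighted theory of \cite{TT} (via the Dunkl sharp function) gives $A_{p_0}$ bounds.
\item[(b)] Alternatively, dominate $S(f)$ pointwise by an $\ell^2$-valued Fefferman--Stein expression $\big(\sum_j (M_\omega|q_jf|^{\cdot})^2\big)^{1/2}$ and use the weighted Fefferman--Stein inequality on spaces of homogeneous type.
\end{itemize}

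\emph{Lower bound.} For the reverse inequality I would argue by duality, using Theorem~\ref{th1.1}: $f=\sum_Q\omega(Q)\psi_Q(\cdot,x_Q)q_Qh(x_Q)$ with $h=T_M^{-1}f$. Pairing with $g\in L^{p_0'}(v^{1-p_0'}d\omega)$ gives
$$|\langle f,g\rangle|\le\sum_Q\omega(Q)|q_Qh(x_Q)|\,|\psi_Q g(x_Q)|\le\int S(h)\,\widetilde S(g)\,d\omega.$$
Here $\widetilde S$ is the analogous square function built from $\psi_j$, which obeys the same weighted bound by the argument above. Hölder then yields $\|f\|\lesssim\|S(h)\|$. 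To return to $S(f)$, I would write $h=T_M^{-1}f=\sum_k(R_1+R_M)^kf$ and show $\|S((R_1+R_M)f)\|_{L^{p_0}(v)}\le\delta\|S(f)\|_{L^{p_0}(v)}$ with $\delta<1$ for $r$ close to $1$ and $M$ large.

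\emph{Main obstacle.} The hardest step will be making this Neumann series estimate uniform in weighted (or variable) norms. In \cite{THHLL1} the smallness of $R_1+R_M$ is only proved on unweighted $L^p$. I would first try to exhibit $R_1,R_M$ as Dunkl--CZ operators whose kernel constants are $O(r-1+r^{-M\epsilon})$, which would give weighted bounds with small norm. Failing that, I would avoid the inversion entirely: I would extrapolate only the upper bound, and obtain the lower bound by duality using $\|f\|_{L^{p(\cdot)}}\sim\sup_{\|g\|_{L^{p'(\cdot)}}\le1}|\langle f,g\rangle|$. I would then write $\langle f,g\rangle=\langle f,T_M(T_M^{-1})g\rangle$, moving the inverse onto $g$ in $L^{p'(\cdot)}$, where unweighted boundedness extends via extrapolation once the CZ structure of $T_M^{-1}$ is known.
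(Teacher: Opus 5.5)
Your upper bound uses the same core object as the paper: the $\ell^2$-valued operator $f\mapsto\{\chi_Q\,q_Qf(x_Q)\}_Q$, treated as a vector-valued Dunkl--Calder\'on--Zygmund operator. The difference is that the paper runs the sharp-function argument of \cite{TT} directly in $L^{p(\cdot)}(\mathbb R^n,d\omega)$, whereas you detour through $A_{p_0}$-weighted bounds and extrapolation. For the lower bound the paper only says ``by duality'', and your proposal is an explicit version of that. Two of your steps need repair.

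\emph{The weighted step, as stated, does not follow.} A Dunkl--Calder\'on--Zygmund kernel is only bounded by $\omega(B(x,d(x,y)))^{-1}(d(x,y)/\|x-y\|)^{\varepsilon}$, so it may be singular at the reflected points $y=\sigma(x)$. What the kernel conditions therefore give is control by $\widetilde M_D f=\sum_{\sigma\in G}M_Df(\sigma(\cdot))$, not by $M_Df$. The map $f\mapsto M_Df(\sigma(\cdot))$ is unbounded on $L^{p_0}(v\,d\omega)$ for some $v\in A_{p_0}(d\omega)$. For instance, take $z$ far from the mirrors, $v(x)=\min\{\|x-z\|,1\}^{a}$ with $0<a<n(p_0-1)$, and $f=\chi_{B(z,\rho)}$; the ratio of norms is $\gtrsim\rho^{-a/p_0}$. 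A kernel of the permitted size near $\sigma(x)$ fails in the same way for $\varepsilon p_0<a<n(p_0-1)$. So you cannot claim bounds for every $v\in A_{p_0}$ defined with Euclidean balls.

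The fix is to work only with $G$-invariant weights and to run the Rubio de Francia iteration with $\widetilde M_D$ instead of $M_D$. Since $p(\cdot)$ and $\omega$ are $G$-invariant, $\widetilde M_D$ is bounded on $L^{p(\cdot)}$ and on $L^{p'(\cdot)}$. The iteration then produces $G$-invariant $A_1$ weights, for which the sharp-function argument does give weighted bounds. With this change extrapolation works, but it buys nothing over the paper's direct argument, which needs only $\widetilde M_D$ on $L^{p(\cdot)}$ and the variable-exponent sharp-maximal inequality.

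\emph{The duality step needs the adjoint.} Your fallback is the right way to make the paper's ``duality argument'' precise, but the inverse must go through $T_M^{*}$. Write $\langle f,g\rangle=\langle T_Mf,(T_M^{*})^{-1}g\rangle=-\ln r\sum_Q\omega(Q)\,q_Qf(x_Q)\,\psi_Q\big((T_M^{*})^{-1}g\big)(x_Q)$. This gives $|\langle f,g\rangle|\lesssim\|S(f)\|_{L^{p(\cdot)}}\|\widetilde S((T_M^{*})^{-1}g)\|_{L^{p'(\cdot)}}$. Writing $g=T_M(T_M^{-1}g)$ and moving $T_M$ onto $f$, as in your sketch, puts $\psi_Q$ on $f$ and produces $\widetilde S(f)$ instead of $S(f)$.

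Boundedness of $(T_M^{*})^{-1}$ on $L^{p'(\cdot)}$ needs neither extrapolation nor any Calder\'on--Zygmund structure of $T_M^{-1}$. Three facts suffice: $T_M^{*}=I-R_1^{*}-R_M^{*}$; the estimates $\|R_1\|_{dcz}\lesssim r-1$ and $\|R_M\|_{dcz}\lesssim r^{-M}$ used in the proof of Theorem~\ref{relation}; and, by homogeneity, the constant in Theorem~\ref{bdlp} is linear in the Calder\'on--Zygmund constants. Together these give $\|R_1^{*}+R_M^{*}\|_{L^{p'(\cdot)}\to L^{p'(\cdot)}}<1$ for $r$ close to $1$ and $M$ large, so the Neumann series converges.

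Your first route, bounding $\|S(T_M^{-1}f)\|$ by $\|S(f)\|$, should be dropped. Small operator norm of $R_1+R_M$ only yields $\|S((R_1+R_M)f)\|\lesssim\delta\|f\|$, and passing from $\|f\|$ to $\|S(f)\|$ is exactly the inequality being proved.
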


Atomic decomposition characterization plays a very important role in the real-variable theory of function spaces and
the boundedness of operators.
We first introduce the atom $a$ for $H_D^{p(\cdot)}(\mathbb R^n)$.
\begin{definition}
Suppose that $p(\cdot)\in LH,
0<p^-\le p^+<q<\infty$ and $q\ge 1$.
We say a function $a$ is a $(p(\cdot),q)-$atom of $H_D^{p(\cdot)}(\mathbb R^n)$,
if $a$ is supported in a cube $Q\subset\mathbb R^n$,
$$
\|a\|_{L^q(\mathbb R^n, d\omega)}\le \omega(Q)^{\frac{1}{q}}\|\chi_Q\|^{-1}_{L^{p(\cdot)}(\mathbb R^n, d\omega)},
$$
and
$$
\int_{\mathbb R^n}a(x)d\omega(x)=0.
$$
\end{definition}
The set of all such pairs $(a,Q)$ will be denoted by $ A(p(\cdot),q)$.
Hereafter, ${N}=n+\sum\limits_{\alpha\in R}\kappa(\alpha).$

Now we state the atomic decompositions for $H_D^{p(\cdot)}(\mathbb R^n)$.
\begin{theorem}\label{atom}
Suppose that $p(\cdot)\in LH, \frac{{N}}{{N}+1}<p^-\le p^+<\infty$. If $f\in L^2(\mathbb R^n, d\omega)\cap H_D^{p(\cdot)}(\mathbb R^n)$, there is a
sequence of $(p(\cdot),q)-$atoms $\{a_j\}$ and a sequence of non-negative scalars $\{\lambda_j\}$
with
\begin{equation*}
  \mathcal{A}(\{\lambda_j\}_{j=1}^\infty,\{Q_j\}_{j=1}^\infty)
  \leq C\|f\|_{H_D^{p(\cdot)}(\mathbb R^n)},
\end{equation*}
{\noindent}such that $f=\sum_j\lambda_ja_j$, where the series converges to
$f$ in both $L^2(\mathbb R^n, d\omega)$ and $H_D^{p(\cdot)}(\mathbb R^n)$ norms,
and
\begin{equation*}
  \mathcal{A}(\{\lambda_j\}_{j=1}^\infty,\{Q_j\}_{j=1}^\infty)
  =\left\|\sum_{j}
  \frac{\lambda_j\chi_{Q_j}}
  {\|\chi_{Q_j}\|_{L^{p(\cdot)}(\mathbb R^n, d\omega)}}
\right\|_{L^{p(\cdot)}(\mathbb R^n, d\omega)}
  <\infty.
\end{equation*}

Conversely, if $f$ has an atomic decomposition  $f=\sum_j\lambda_ja_j$ in $L^2(\mathbb R^n, d\omega)$,
then $f\in H_D^{p(\cdot)}(\mathbb R^n)$ and 
$$
\|f\|_{H_D^{p(\cdot)}(\mathbb R^n)}\le C\mathcal{A}(\{\lambda_j\}_{j=1}^\infty,\{Q_j\}_{j=1}^\infty).
$$
\end{theorem}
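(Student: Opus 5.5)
We follow the classical route for Hardy spaces on spaces of homogeneous type, as in \cite{THHLL1,THHLL2}: a stopping-time decomposition built on the discrete Calder\'on reproducing formula, with the variable-exponent estimates reduced to vector-valued maximal inequalities on $(\mathbb R^n,|\cdot|,d\omega)$.

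\emph{Existence of a decomposition.} Let $f\in L^2(\mathbb R^n,d\omega)\cap H_D^{p(\cdot)}(\mathbb R^n)$. By the Remark, write
$$f=\sum_{j}\sum_{Q\in\mathcal Q^j}\omega(Q)\psi_Q(x,x_Q)q_Qh(x_Q),$$
with $\|S(h)\|_{L^{p(\cdot)}(d\omega)}\lesssim\|f\|_{H_D^{p(\cdot)}}$. For $k\in\mathbb Z$ set
$$\Omega_k=\{x:S(h)(x)>2^k\},\qquad \mathcal B_k=\Big\{Q:\ \omega(Q\cap\Omega_k)>\tfrac12\omega(Q),\ \omega(Q\cap\Omega_{k+1})\le\tfrac12\omega(Q)\Big\}.$$
Let $\widetilde\Omega_k=\{M_\omega\chi_{\Omega_k}>1/2\}$, where $M_\omega$ is the Hardy--Littlewood maximal operator for $d\omega$. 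Every dyadic cube in $\mathcal B_k$ lies in some maximal dyadic cube $\widetilde Q\subset\widetilde\Omega_k$. Group the terms accordingly:
$$f=\sum_k\sum_{\widetilde Q}\lambda_{k,\widetilde Q}\,a_{k,\widetilde Q},\qquad \lambda_{k,\widetilde Q}=C\,2^k\|\chi_{\widetilde Q}\|_{L^{p(\cdot)}}.$$

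Since $\psi$ is supported in $B_d(0,1/4)$, each $\psi_Q(\cdot,x_Q)$ is supported in the $G$-orbit of a ball. This is the key Dunkl issue: the natural support is the union $\bigcup_{\sigma\in G}\sigma(\cdot)$ rather than a single cube. It is handled by taking the atom's cube to be a fixed dilate of $\widetilde Q$ relative to the orbit distance $d$ and using $G$-invariance of $p(\cdot)$ and $\omega$. The cancellation $\int a\,d\omega=0$ follows from $\int\psi\,d\omega=0$ and the invariance $\int\psi_t(x,y)\,d\omega(x)=0$.

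The $L^2$ size condition comes from duality against $g\in L^2$ together with the discrete Littlewood--Paley $L^2$ bound:
$$\Big\|\sum_{Q\in\mathcal B_k,\,Q\subset\widetilde Q}\cdots\Big\|_2\lesssim\Big(\sum_{Q}\omega(Q)|q_Qh(x_Q)|^2\Big)^{1/2}\lesssim 2^k\omega(\widetilde Q)^{1/2}.$$
The last step integrates $S(h)^2$ over $\widetilde Q\setminus\Omega_{k+1}$ and uses $\omega(Q\setminus\Omega_{k+1})\ge\frac12\omega(Q)$. Thus $q=2$, and other $q$ follow similarly by $L^q$ duality. Finally,
$$\mathcal A\lesssim\Big\|\sum_k2^k\chi_{\widetilde\Omega_k}\Big\|_{L^{p(\cdot)}}\lesssim\Big\|\sum_k2^k\chi_{\Omega_k}\Big\|_{L^{p(\cdot)}}\lesssim\|S(h)\|_{L^{p(\cdot)}}.$$
The middle inequality uses the Fefferman--Stein vector-valued maximal inequality on $L^{p(\cdot)/r}(\ell^{1/r})$ for $r<p^-$, which is valid for $p(\cdot)\in LH$ on this space of homogeneous type (via \cite{TT}). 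Convergence in $L^2$ follows from the $L^2$ convergence in the Remark; convergence in $H_D^{p(\cdot)}$ follows from the converse part applied to the tails.

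\emph{Converse.} Let $f=\sum_j\lambda_ja_j$. For each atom with cube $Q_j$ of center $x_j$, split
$$S(a_j)=S(a_j)\chi_{\widehat Q_j}+S(a_j)\chi_{\widehat Q_j^c},$$
where $\widehat Q_j$ is the $G$-orbit of a dilate of $Q_j$. On $\widehat Q_j$, the $L^q$ boundedness of $S$ gives
$$\|S(a_j)\chi_{\widehat Q_j}\|_{L^q}\lesssim\omega(Q_j)^{1/q}\|\chi_{Q_j}\|_{L^{p(\cdot)}}^{-1}.$$
The known lemma for $L^q$-normalized families then controls the sum: for $q>p^+$,
$$\Big\|\sum_j\lambda_jb_j\Big\|_{L^{p(\cdot)}}\lesssim\Big\|\sum_j\lambda_j\chi_{Q_j}/\|\chi_{Q_j}\|_{L^{p(\cdot)}}\Big\|_{L^{p(\cdot)}},$$
proved by duality and $L^{(p(\cdot)/r)'}$ boundedness of $M_\omega$. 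Off $\widehat Q_j$, the cancellation of $a_j$ and the smoothness of $q_t$ in the Euclidean variable give
$$|S(a_j)(x)|\lesssim\|\chi_{Q_j}\|_{L^{p(\cdot)}}^{-1}\Big(\frac{\ell(Q_j)}{|x-x_j|}\Big)\frac{\omega(Q_j)}{\omega(B(x_j,d(x,x_j)))}\lesssim\|\chi_{Q_j}\|_{L^{p(\cdot)}}^{-1}\big(M_\omega\chi_{Q_j}(x)\big)^{\frac{N+1}{N}}.$$
This is exactly where $p^->N/(N+1)$ enters: it makes the maximal inequality for $M_\omega$ hold on $L^{\frac{N+1}{N}p(\cdot)}(\ell^{\frac{N+1}{N}})$.

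\emph{Main obstacle.} The hardest step is the off-support estimate, because the Dunkl kernels decay in the orbit distance $d(x,y)$ while the smoothness gain is in the Euclidean $|x-y|$. First I would use the size/smoothness estimates of $q_t$ from \cite{THHLL1}, of the form
$$\frac{1}{V(x,y,t)}\Big(\frac{t}{t+|x-y|}\Big)\Big(\frac{t}{t+d(x,y)}\Big)^{\varepsilon}.$$
The $\varepsilon$ power of $d$ supplies the integrability needed to compare with $M_\omega\chi_{Q_j}$ summed over the $|G|$ reflected copies $\sigma(Q_j)$. Each reflected copy has $\|\chi_{\sigma Q_j}\|_{L^{p(\cdot)}}=\|\chi_{Q_j}\|_{L^{p(\cdot)}}$ by $G$-invariance, which closes the argument.
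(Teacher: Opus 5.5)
Your route differs from the paper's, and its existence half has a genuine gap exactly at what you call the ``key Dunkl issue''. Each $\psi_Q(\cdot,x_Q)$ is supported in $B_d(x_Q,cr^{-j})=\bigcup_{\sigma\in G}B(\sigma(x_Q),cr^{-j})$. So the grouped function $a_{k,\widetilde Q}$ lives on $\bigcup_{\sigma\in G}\sigma(C\widetilde Q)$ and has vanishing $d\omega$-integral only over this whole union. A dilate of $\widetilde Q$ with respect to $d$ is not a cube, so $a_{k,\widetilde Q}$ is not a $(p(\cdot),q)$-atom. Neither obvious repair works:
\begin{itemize}
\item Enclosing the orbit in one cube $Q^*$ of side $\sim\|x_{\widetilde Q}\|\gg\ell(\widetilde Q)$ multiplies the coefficient by about $(\omega(Q^*)/\omega(\widetilde Q))^{1/p-1/q}$, already for constant $p$. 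This ruins $\mathcal A\lesssim\|f\|_{H_D^{p(\cdot)}}$.
\item Cutting $a_{k,\widetilde Q}$ into its restrictions to the copies $\sigma(C\widetilde Q)$ destroys the moment condition, because a Dunkl translate $\psi_t(\cdot,x_Q)$ need not have mean zero on each reflected ball separately.
\end{itemize}
This is not mere relabeling. For $p\le 1$, a function with only global cancellation, such as $\omega(Q)^{1-1/p}\big(\chi_Q/\omega(Q)-\chi_{\sigma(Q)}/\omega(\sigma(Q))\big)$, has $H^p_{cw}$ quasi-norm unbounded as $\|x_Q-\sigma(x_Q)\|/\ell(Q)\to\infty$, hence also unbounded $H^p_D$ quasi-norm by Theorem \ref{relation}. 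To close the gap you would need two further steps. First, show that the mass of $a_{k,\widetilde Q}$ on each far copy is small by a power of $\ell(\widetilde Q)/\|x_{\widetilde Q}-\sigma(x_{\widetilde Q})\|$; this would have to come from the extra Euclidean decay factor $t/(t+\|x-y\|)$ in the Dunkl kernel bounds. Second, move that mass back to $C\widetilde Q$ through a chain of atoms on growing cubes, whose total contribution to $\mathcal A$ is finite only because $p^->N/(N+1)$. None of this is in the proposal. Separately, the Remark you start from is asserted only for $p^+\le 1$, whereas the theorem allows $p^+<\infty$.

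The paper never faces this problem. It proves $\|f\|_{H_D^{p(\cdot)}}\sim\|f\|_{H^{p(\cdot)}_{cw}}$ in Theorem \ref{relation}, using the almost-orthogonality estimates for $q_{Q'}D_k$, $D_kR_1D_{k'}$ and $D_k\psi_{Q'}$ from \cite{THHLL1} together with Lemma \ref{FS}. It then imports the atomic decomposition of $H^{p(\cdot)}_{cw}$ on $(\mathbb R^n,\|\cdot\|,d\omega)$ from \cite{Tan2023,YHYY}. Those atoms come from Coifman's approximation of the identity for the Euclidean metric, so they are automatically supported in single Euclidean balls (equivalently cubes) with $\int a\,d\omega=0$. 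Both directions of the theorem then follow from the norm equivalence. Your converse is a reasonable direct alternative: it is essentially the paper's proof of Theorem \ref{bdhp} run for the vector-valued operator behind $S$, with $p^->N/(N+1)$ entering through the power $(N+1)/N$ of the maximal function. However, the pointwise bound must use $\widetilde M_D\chi_{Q_j}=\sum_{\sigma\in G}M_D\chi_{Q_j}(\sigma(\cdot))$ rather than $M_\omega\chi_{Q_j}$. Near a far copy $\sigma(Q_j)$ your left-hand side is about $\ell(Q_j)/\|x_j-\sigma(x_j)\|$, while $(M_\omega\chi_{Q_j})^{(N+1)/N}$ is at most of order $(\ell(Q_j)/\|x_j-\sigma(x_j)\|)^{n(N+1)/N}$, which is much smaller. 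Your last paragraph points in this direction, and with that correction this half goes through.
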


  Let ${\dot{C}}^\eta(\Bbb
  R^n)$ be the H\"older space of continuous functions $f$ with 
  $$\|f\|_{{\dot{C}}^\eta}:=\sup\limits_{x\ne y} \frac{|f(x)-f(y)|}{\|x-y\|^{\eta}}<\infty. $$
  We denote ${\dot{C}}^\eta_0(\Bbb R^n)$ by the H\"older space ${\dot{C}}^\eta(\Bbb R^n)$ with compact supports.   
 There are two important metrics in the Dunkl setting. The Euclidean metric is defined by 
$$\|x-y\|:=\left\{\sum\limits_{j=1}^n|x_j-y_j|^2\right\}^\frac{1}{2}$$ and the so-called Dunkl metric is given by
$$d(x,y)=\min\limits_{\sigma\in G}\|x-\sigma(y)\|,$$ the distance  between two G-orbits $\mathcal{O}(x)$ and $\mathcal{O}(y).$ Obviously, 
$$d(x,y)\leqslant \|x-y\|, d(x,y)=d(y,x),\quad \mbox{and}\quad d(x,y)\leqslant
d(x,z)+d(z,y)$$ for all $x,y,z\in \R^n.$ Moreover,
$\omega(B(x,r))\sim \omega(B(y,r))$ when $d(x,y)\sim r,$ and
$$\omega(B(x,r))\leqslant \omega(B_d(x,r))\leqslant
|G|\omega(B(x,r)),$$ 
where $B(x,r):=\{y\in
\mathbb{R}^{n}:\|x-y\|<r\}$ and 
$$B_d(x,r):=\{y\in
\mathbb{R}^{n}:d(x,y)<r\}.$$
Moreover, 
for all $x\in\mathbb{R}^n$ and $0<r_1<r_2$, there exists a positive constant $C$ such that
\[
C^{-1}\left(\frac{r_2}{r_1}\right)^n
\le
\frac{\omega(B(x,r_2))}{\omega(B(x,r_1))}
\le
C\left(\frac{r_2}{r_1}\right)^N.
\]

This implies that
$d\omega(x)$ satisfies the doubling and reverse
doubling properties, that is, there is a constant $C > 0$ such that
for all $ x\in \mathbb{R}^{n}, r>0,\lambda \geqslant 1$ and ${N}=n+\sum\limits_{\alpha\in R}\kappa(\alpha),$
$$C^{-1}\lambda^{n}\omega(B(x, r)) \leqslant \omega(B(x, \lambda
r))\leqslant C\lambda^{ N}\omega(B(x, r)).$$ 
It turns out that the Dunkl setting, $(\Bbb R^n,
  \|\cdot\|, dw),$ is space of homogeneous type in the sense of
  Coifman and Wiess with the measure $dw$ satisfing the
  doubling and the reverse doubling properties. 
Now we recall the following definition of Dunkl--Calder\'on--Zygmund operator which is introduced in \cite{THHLL1}.
\begin{definition}\label{df1.12}
        An operator $T: C_0^\eta(\mathbb{R}^n)\rightarrow(C_0^\eta(\mathbb{R}^n))'$ for some $\eta>0,$ is said to be a Dunkl--Calder\'on--Zygmund singular integral operator if $K(x,y),$ the kernel of $T$,
        satisfies the following conditions: there exists some $0<\varepsilon\leqslant 1$ such that
    \begin{equation}\label{si}
    |K(x,y)|\lesssim \frac1{\omega(B(x,d(x,y)))}\Big(\frac{d(x,y)}{\|x-y\|}\Big)^\varepsilon
    \end{equation}
    for all ${  d(x,y)\not=0};$
    \begin{equation}\label{smooth y3}
    |K(x,y)-K(x,y')|\lesssim \Big(\frac{\|y-y'\|}{\|x-y\|}\Big)^\varepsilon\frac{1}{\omega(B(x,d(x,y)))}
    \end{equation}
    for $\|y-y'\|\leqslant  d(x,y)/2;$
    \begin{equation}\label{smooth x3}
    |K(x',y)-K(x,y)|\lesssim \Big(\frac{\|x-x'\|}{\|x-y\|}\Big)^\varepsilon\frac1{\omega(B(x,d(x,y)))}
    \end{equation}
    for $\|x-x'\|\leqslant  d(x,y)/2.$

    Moreover,
    $$\langle T(f),g\rangle=\int_{\R^N}\int_{\R^n} K(x,y)f(x)g(y)d\omega(x)d\omega(y)$$
    for all $f$ and $g$ in $C_0^\eta(\mathbb{R}^n)$ with $\supp f\cap \supp g=\emptyset.$

    A Dunkl--Calder\'on--Zygmund singular integral operator is said to be the Dunkl--Calder\'on--Zygmund operator if it extends a bounded operator on $L^2(\mathbb R^n,\omega).$
\end{definition}

The boundedness of the Dunkl-Calder\'on-Zygmund operators on the
variable Dunkl--Lebesgue spaces is given in \cite{TT}. 
  \begin{theorem}\label{bdlp}
  	Suppose that $T$ is a Dunkl--Calder\'on--Zygmund operator and  $p(\cdot)\in  \mathcal P\cap LH$  with $\frac{{N}}{{N}+\epsilon}<p^-\le p^+<\infty$. Then $T$ extends to a bounded operator on the variable Dunkl--Lebesgue space $L^{p(\cdot)}(\mathbb R^n, dw)$. 
Moreover, there is a constant $C$ such that
  	$$\|T(f)\|_{L^{p(\cdot)}(\mathbb R^n, dw)} \leqslant C\|f\|_{L^{p(\cdot)}(\mathbb R^n, dw)}.$$ 	
  \end{theorem}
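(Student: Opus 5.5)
Since Theorem~\ref{bdlp} is quoted from \cite{TT}, I would reproduce the proof there: a pointwise sharp-function estimate combined with the extrapolation and Fefferman--Stein machinery for variable exponents on the space of homogeneous type $(\mathbb R^n,\|\cdot\|,d\omega)$.

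I will read the hypothesis $p(\cdot)\in\mathcal P\cap LH$ as giving $p^->1$. On this space the Hardy--Littlewood maximal operator $M$, defined over Euclidean balls with measure $\omega$, is bounded on $L^{p(\cdot)}(\mathbb R^n,d\omega)$ whenever $p\in LH$ and $p^->1$. This holds by the doubling property of $\omega$ and the standard Diening-type argument.

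Let $\mathcal M=M^{G}$ denote the $G$-orbit maximal operator,
$$\mathcal Mf(x)=\sum_{\sigma\in G}Mf(\sigma x).$$
Because $p(\cdot)$ is $G$-invariant and $\omega$ is $G$-invariant, $f\mapsto f\circ\sigma$ is an isometry of $L^{p(\cdot)}(d\omega)$. Hence $\mathcal M$ is also bounded on $L^{p(\cdot)}(d\omega)$.

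\textbf{Step 1: pointwise sharp-function estimate.} Fix $1<s<p^-$. For $f\in C_0^\eta$ I would show
$$M^\sharp(Tf)(x)\lesssim \sum_{\sigma\in G}\big(M(|f|^s)(\sigma x)\big)^{1/s}.$$
To do this, fix a ball $B=B(x_0,r)$ and split
$$f=f\chi_{\mathcal O(2B)}+f\chi_{\mathcal O(2B)^c}=f_1+f_2,$$
where $\mathcal O(2B)=\bigcup_\sigma \sigma(2B)$ is the Dunkl-metric ball $B_d(x_0,2r)$.
\begin{itemize}
\item The local part $f_1$ is handled by the $L^s$ boundedness of $T$, which follows from $L^2$ boundedness together with the kernel conditions via Calder\'on--Zygmund theory on the homogeneous space. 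Since $\omega(\mathcal O(2B))\le |G|\omega(2B)$, this term is bounded by the orbit sum of $M(|f|^s)^{1/s}$.
\item For the far part $f_2$, I compare $Tf_2(x)$ with $Tf_2(x_0)$ using the smoothness \eqref{smooth x3}. This is legitimate because $d(x_0,y)\ge 2r\ge 2\|x-x_0\|$. The resulting integral
$$\int_{d(x_0,y)>2r}\Big(\frac{r}{\|x_0-y\|}\Big)^\varepsilon\frac{|f(y)|}{\omega(B(x_0,d(x_0,y)))}\,d\omega(y)$$
is decomposed into Dunkl annuli $2^kr\le d(x_0,y)<2^{k+1}r$. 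Using $\|x_0-y\|\ge d(x_0,y)$ yields decay $2^{-k\varepsilon}$. Each annulus lies in $\bigcup_\sigma \sigma B(x_0,2^{k+1}r)$, so it is controlled by $\sum_\sigma Mf(\sigma x_0)$, summed geometrically.
\end{itemize}

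\textbf{Step 2: passing to norms.} By the variable-exponent Fefferman--Stein inequality on spaces of homogeneous type,
$$\|Tf\|_{L^{p(\cdot)}}\lesssim \|M^\sharp(Tf)\|_{L^{p(\cdot)}}$$
whenever $M$ is bounded on $L^{(p/s)'(\cdot)}$ and $Tf$ lies a priori in a suitable $L^{p_0}$. Alternatively, one can use Rubio de Francia extrapolation from the weighted inequality $\|Tf\|_{L^{p_0}(v)}\lesssim\|f\|_{L^{p_0}(v)}$ for $G$-invariant $A_{p_0}$ weights $v$.

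Then, since $p(\cdot)/s\in LH$ with $(p/s)^->1$,
$$\|(M(|f|^s)\circ\sigma)^{1/s}\|_{L^{p(\cdot)}}=\|M(|f|^s)\|^{1/s}_{L^{p(\cdot)/s}}\lesssim\|f\|_{L^{p(\cdot)}}.$$
Density of $C_0^\eta$ in $L^{p(\cdot)}(d\omega)$ then gives the extension of $T$ and the stated bound.

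\textbf{Main obstacle.} I expect the difficulty to lie in Step 1's far-part estimate. The kernel bound uses $\omega(B(x,d(x,y)))$ and the Dunkl distance, whereas the smoothness is measured in the Euclidean distance. One has to verify that the comparison $\|x-x_0\|\le d(x_0,y)/2$ holds for every $y$ outside the orbit-ball. One also has to check that $\omega(B(x_0,d))$ is comparable to the $\omega$-measure of the orbit of the Euclidean ball, which uses the $|G|$-comparability of $\omega(B_d)$ and $\omega(B)$.

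A second subtle point is justifying the a priori finiteness needed in the Fefferman--Stein step. I would handle it by truncation, using Theorem~\ref{LP}-type density arguments or by working with $f\in L^2\cap L^{p(\cdot)}$ of compact support.
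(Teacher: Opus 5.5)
Your proposal is correct and follows essentially the route the paper attributes to \cite{TT}. The paper itself gives no proof and only cites that work, describing it as a pointwise estimate for the Dunkl sharp function of $Tf$ (here, by $G$-orbit sums of $M(|f|^s)^{1/s}$, with the far part handled by the $x$-smoothness of the kernel), followed by the variable-exponent Fefferman--Stein inequality and the $G$-invariance of $p(\cdot)$ and $\omega$. The only cosmetic fix is that the far-part bound should be stated at the point $z\in B$ where $M^\sharp$ is evaluated rather than at $x_0$; this is immediate from $B(\sigma x_0,2^{k+1}r)\subset B(\sigma z,2^{k+2}r)$ and doubling.
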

  
 Now we give the boundedness of the Dunkl--Calder\'on--Zygmund operators on the
variable Dunkl--Hardy spaces. 
   \begin{theorem}\label{bdhp}
  	Suppose that $T$ is a Dunkl--Calder\'on--Zygmund operator and  $p(\cdot)\in\mathcal P^0\cap LH$ with $\frac{{N}}{{N}+\epsilon}<p^-\le p^+<\infty$. 
Then $T$ extends to a bounded operator on the variable Dunkl--Hardy space $H_D^{p(\cdot)}(\mathbb R^n)$ to $L^{p(\cdot)}(\mathbb R^n,dw)$. 
Moreover, there is a constant $C$ such that
  	$$\|T(f)\|_{L^{p(\cdot)}(\mathbb R^n,dw)} \leqslant C\|f\|_{H_D^{p(\cdot)}(\mathbb R^n)}.$$ 	
  \end{theorem}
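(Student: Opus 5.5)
We prove Theorem \ref{bdhp} via the atomic decomposition of Theorem \ref{atom} together with Theorem \ref{bdlp} and the standard variable-exponent machinery. Throughout, $r_0:=\min\{p^-,1\}$.

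\emph{Reduction to atoms.} By density it suffices to treat $f\in L^2(\mathbb R^n,d\omega)\cap H_D^{p(\cdot)}(\mathbb R^n)$. Theorem \ref{atom} gives $f=\sum_j\lambda_ja_j$ with convergence in $L^2(\mathbb R^n,d\omega)$, where each $a_j$ is a $(p(\cdot),2)$-atom supported in a cube $Q_j$ and $\mathcal A(\{\lambda_j\},\{Q_j\})\lesssim\|f\|_{H_D^{p(\cdot)}}$. Since $T$ is bounded on $L^2$, we have $Tf=\sum_j\lambda_jTa_j$ in $L^2$, hence a.e. along a subsequence. Therefore
\[
|Tf|\le\sum_j\lambda_j|Ta_j|\chi_{Q_j^*}+\sum_j\lambda_j|Ta_j|\chi_{(Q_j^*)^c}=:I+II,
\]
where $Q_j^*$ is the dilate of the $G$-orbit neighborhood of $Q_j$: $Q_j^*=\{y:d(y,x_{Q_j})<C\ell(Q_j)\}$, a finite union of $|G|$ reflected cubes. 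This choice is forced because the kernel bounds in Definition \ref{df1.12} only use $d(x,y)$ in the denominator.

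\emph{Local part.} Since $T$ is bounded on $L^2$ and $\omega(Q_j^*)\sim\omega(Q_j)$ by doubling together with $\omega(\sigma Q)=\omega(Q)$, we get
\[
\|Ta_j\chi_{Q_j^*}\|_{L^2}\lesssim\omega(Q_j^*)^{1/2}\|\chi_{Q_j}\|_{L^{p(\cdot)}}^{-1}.
\]
We then invoke the standard variable-exponent lemma in the space of homogeneous type: if $b_j$ is supported in $B_j$ with $\|b_j\|_{L^q}\le\omega(B_j)^{1/q}$ and $q>p^+$ (taking $q$ large), then
\[
\Big\|\sum_j\mu_jb_j\Big\|_{L^{p(\cdot)}}\lesssim\Big\|\sum_j\mu_j\chi_{B_j}\Big\|_{L^{p(\cdot)}}.
\]
Its proof goes by duality on $L^{(p(\cdot)/r_0)'}$ and the boundedness of the Hardy--Littlewood maximal operator for $LH$ exponents on $(\mathbb R^n,\|\cdot\|,d\omega)$. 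We therefore use atoms with $q$ large; Theorem \ref{atom} allows this. Combining this with $\|\chi_{Q_j^*}\|_{L^{p(\cdot)}}\sim\|\chi_{Q_j}\|_{L^{p(\cdot)}}$ and $\chi_{Q_j^*}\le\sum_\sigma (M\chi_{\sigma Q_j})^{s}$ pointwise, together with the Fefferman--Stein vector-valued inequality in $L^{p(\cdot)/s}$, handles $I$.

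\emph{Far part.} For $x\notin Q_j^*$ we use the vanishing mean of $a_j$ and the smoothness condition \eqref{smooth y3}, applied with $y'=x_{Q_j}$ (valid because $\|y-x_{Q_j}\|\lesssim\ell(Q_j)\le d(x,y)/2$):
\[
|Ta_j(x)|\le\int|K(x,y)-K(x,x_{Q_j})||a_j(y)|\,d\omega(y)\lesssim\Big(\frac{\ell_j}{\|x-x_{Q_j}\|}\Big)^{\varepsilon}\frac{\omega(Q_j)\|\chi_{Q_j}\|_{L^{p(\cdot)}}^{-1}}{\omega(B(x_{Q_j},d(x,x_{Q_j})))}.
\]
Since $\|x-x_Q\|\ge d(x,x_Q)$, and $\omega(B(x_Q,d))\sim\omega(B(x,d))$ up to the $G$-orbit, this is dominated by
\[
\|\chi_{Q_j}\|_{L^{p(\cdot)}}^{-1}\sum_{\sigma\in G}\big[M\chi_{\sigma Q_j}(x)\big]^{(N+\varepsilon)/N}.
\]
Set $s=N/(N+\varepsilon)<p^-$ by hypothesis. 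Then
\[
\|II\|_{L^{p(\cdot)}}\lesssim\Big\|\Big(\sum_{j,\sigma}\frac{\lambda_j}{\|\chi_{Q_j}\|_{L^{p(\cdot)}}}(M\chi_{\sigma Q_j})^{1/s}\Big)^{s}\Big\|_{L^{p(\cdot)/s}}^{1/s},
\]
and the Fefferman--Stein vector-valued maximal inequality in $L^{p(\cdot)/s}(\ell^{1/s})$, which holds since $p(\cdot)/s\in\mathcal P\cap LH$, bounds this by $\big\|\sum_{j,\sigma}\lambda_j\chi_{\sigma Q_j}/\|\chi_{Q_j}\|_{L^{p(\cdot)}}\big\|_{L^{p(\cdot)}}$. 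Finally, $G$-invariance of $p(\cdot)$ and of $\omega$ gives $\|\chi_{\sigma Q}\|_{L^{p(\cdot)}}=\|\chi_Q\|_{L^{p(\cdot)}}$, and the reflection $x\mapsto\sigma x$ preserves the $L^{p(\cdot)}$ norm, so this is at most $|G|\,\mathcal A\lesssim\|f\|_{H_D^{p(\cdot)}}$.

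\emph{Main obstacle.} The hardest step is the far-part estimate. There the decay available from \eqref{smooth y3} is measured partly in $\|\cdot\|$ and partly in $d$, whereas the maximal functions live on the cubes $\sigma Q_j$. The key comparison to verify carefully is
\[
\frac{\omega(Q)}{\omega(B(x,d(x,x_Q)))}\Big(\frac{\ell}{d(x,x_Q)}\Big)^{\varepsilon}\lesssim\sum_\sigma (M\chi_{\sigma Q}(x))^{1+\varepsilon/N},
\]
which uses the upper doubling exponent $N$. Only the decay in $d$ is used here; the extra factor $(d/\|x-y\|)^\varepsilon\le1$ is simply discarded. This comparison is exactly where the threshold $p^->N/(N+\varepsilon)$ enters.
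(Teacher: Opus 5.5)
Your proposal is correct and follows essentially the same route as the paper. Both arguments use the atomic decomposition from Theorem \ref{atom} and split at the Dunkl-metric neighborhood $\{x: d(x,x_{Q_j})\lesssim \ell(Q_j)\}$. The local part is handled by the Grafakos--Kalton-type Lemma \ref{GL}. The far part is handled by the cancellation plus $y$-smoothness estimate, the comparison \eqref{claim22} (your $\sum_{\sigma}M\chi_{\sigma Q_j}$ is exactly $\widetilde M_D\chi_{Q_j}$ rewritten), and Fefferman--Stein in $L^{p(\cdot)/s}(\ell^{1/s})$ with $s=N/(N+\varepsilon)$.

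The remaining slips are cosmetic and easily fixed; the paper glosses over the same points. Once you take $q$ large you need the $L^q$-boundedness of $T$ (Theorem \ref{bdlp} with constant exponent), not just $L^2$. The pointwise domination of $\chi_{Q_j^*}$ should use the power $1/s$ rather than $s$ of $M\chi_{\sigma Q_j}$; this holds for any positive power. Since $Q_j^*$ is a union of $|G|$ balls, you should apply the Grafakos--Kalton lemma to those pieces separately.
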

 
 We point out that the argument used in the proof of Theorem~\ref{bdhp}  only requires the regularity of $K$ in the second variable. In particular, no smoothness condition on the first variable $x$ is needed.
As a consequence of Theorem \ref{bdhp}, we also obtain the boundedness
of the Dunkl--Riesz transforms in the variable Dunkl-Hardy space
$H_D^{p(\cdot)}(\mathbb R^n)$.

\begin{corollary}\label{bdr}
    Let $p(\cdot)\in\mathcal P^0\cap LH$ with $\frac{{N}}{{N}+1}<p^-\le p^+<\infty$.
  Then the Dunkl--Riesz transforms $R_j, 1\leqslant j\leqslant N,$ are bounded from the variable Dunkl-Hardy spaces
    $H_D^{p(\cdot)}(\mathbb R^n)$ to $L^{p(\cdot)}(\mathbb R^n,dw)$.
\end{corollary}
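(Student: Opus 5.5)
The plan is to deduce Corollary~\ref{bdr} directly from Theorem~\ref{bdhp}. This requires checking that each Dunkl--Riesz transform $R_j$ is a Dunkl--Calder\'on--Zygmund operator in the sense of Definition~\ref{df1.12}, with regularity exponent $\varepsilon=1$. Once that is known, the hypothesis $\frac{N}{N+1}<p^-$ is exactly the condition $\frac{N}{N+\varepsilon}<p^-$ of Theorem~\ref{bdhp} with $\varepsilon=1$. The conclusion $\|R_j f\|_{L^{p(\cdot)}(\mathbb R^n,dw)}\le C\|f\|_{H_D^{p(\cdot)}(\mathbb R^n)}$ then follows, first for $f\in L^2\cap H_D^{p(\cdot)}$ and then by density for the completion in Definition~\ref{df1.3}.

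The verification has three steps.

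\emph{Step 1: $L^2$ boundedness.} By Plancherel for the Dunkl transform, $\widehat{R_jf}(\xi)=-i\frac{\xi_j}{\|\xi\|}\widehat f(\xi)$ up to a sign convention. The multiplier is bounded by $1$, so $R_j$ is bounded on $L^2(\mathbb R^n,d\omega)$.

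\emph{Step 2: kernel representation.} From $R_j=-c\int_0^\infty T_je^{t\triangle}\frac{dt}{\sqrt t}$, the kernel for $d(x,y)\neq0$ is
$$K_j(x,y)=-c\int_0^\infty T_{j,x}h_t(x,y)\frac{dt}{\sqrt t}.$$
For $f,g\in C_0^\eta$ with disjoint supports, the pairing $\langle R_jf,g\rangle$ equals the double integral against $K_j$. This follows by Fubini, once we know the integral defining $K_j$ converges absolutely off the set $\{d(x,y)=0\}$; Step 3 supplies that.

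\emph{Step 3: size and smoothness of $K_j$.} This is the main step. I would invoke the known Dunkl heat kernel estimates (Dziuba\'nski--Hejna type), which hold for some constants $c,C>0$:
$$|T_{j,x}h_t(x,y)|\lesssim \frac{1}{\sqrt t}\Big(1+\frac{\|x-y\|}{\sqrt t}\Big)^{-2}\frac{e^{-c\,d(x,y)^2/t}}{\omega(B(x,\sqrt t))},$$
together with the analogous bounds for the gradient in $y$ (and in $x$), valid when $\|y-y'\|\le\sqrt t$:
$$|T_{j,x}h_t(x,y)-T_{j,x}h_t(x,y')|\lesssim \frac{\|y-y'\|}{\sqrt t}\cdot\frac{1}{\sqrt t}\Big(1+\frac{\|x-y\|}{\sqrt t}\Big)^{-2}\frac{e^{-c\,d(x,y)^2/t}}{\omega(B(x,\sqrt t))}.$$

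Integrating in $t$ works as follows. Split the $t$-integral at $t=d(x,y)^2$. For $t\le d(x,y)^2$, the Gaussian factor gives rapid decay. For $t\ge d(x,y)^2$, use doubling to write $\omega(B(x,\sqrt t))\gtrsim \omega(B(x,d(x,y)))\,(\sqrt t/d(x,y))^{n}$. The factor $(1+\|x-y\|/\sqrt t)^{-2}$ then produces the gain $(d(x,y)/\|x-y\|)$, which yields \eqref{si} with $\varepsilon=1$.

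The smoothness estimate is handled the same way, splitting according to whether $\|y-y'\|\le\sqrt t$ or $\|y-y'\|>\sqrt t$. In the second regime, bound each term separately by the size estimate. The outcome is \eqref{smooth y3} with $\varepsilon=1$; since the remark after Theorem~\ref{bdhp} says only $y$-regularity is needed, the $x$-smoothness \eqref{smooth x3} can be treated as a bonus.

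I expect the main obstacle to be this kernel bookkeeping, in particular obtaining the full factor $(d(x,y)/\|x-y\|)^1$ rather than a smaller power. If the available heat kernel bounds only give a weaker decay exponent, I would instead cite the result of \cite{THHLL1} that the Dunkl--Riesz transforms are Dunkl--Calder\'on--Zygmund operators, using $\varepsilon=1$ as stated there. Theorem~\ref{bdhp} then applies directly.
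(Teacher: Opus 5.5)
Your proposal is correct and follows the paper's route: the paper gives no separate argument and reads the corollary off Theorem \ref{bdhp} with $\varepsilon=1$, taking from \cite{THHLL1} that the $R_j$ are Dunkl--Calder\'on--Zygmund operators with exponent $1$, which is exactly your fallback. Your heat-kernel sketch is a reasonable way to justify that input, with two adjustments: in the range $t\le d(x,y)^2$ you also need the factor $(1+\|x-y\|/\sqrt t)^{-2}\le d(x,y)^2/\|x-y\|^2$, since the Gaussian alone does not give the gain $d(x,y)/\|x-y\|$ there; and you should verify \eqref{smooth x3} (or cite the $L^q$-boundedness of $R_j$) rather than treat it as a bonus, because Theorem \ref{bdhp} is stated for operators satisfying all of Definition \ref{df1.12} and its proof uses $L^q$-boundedness for some $q>p^+$.
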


The paper is organized as follows. In the next section we will give some preliminaries and notation. Then the Littlewood--Paley characterizations for 
variable Dunkl--Lebesgue spaces will be proved in Section 3. In this section, we will also present the atomic decomposition characterization for variable Dunkl--Hardy spaces.
The proofs of the boundedness of operators on the variable Dunkl--Hardy spaces will be included in Section 4.

\section{Preliminaries and notation}

In this section, we give an account on results from the Dunkl theory and the theory of the spaces of homogeneous type which will be relevant for the sequel.
Throughout this paper, $C$ or $c$ denotes a positive constant that is independent of the main parameters involved but may vary at each occurrence. To denote the dependence of the constants on some parameter $r$, we will write $C{(r)}$. We denote $f\leq Cg$ by $f\lesssim g$. If $f\lesssim g\lesssim f$, we write $f\sim g$.
In what follows, for any given $p^-\in(0, \infty)$, let $p_-:=\min\{p^-, 1\}.$

\subsection{Dunkl--Lebesgue spaces with variable exponents}
To define the variable function spaces in the Dunkl setting, let us first give the definitions of variable exponent functions in the Dunkl setting.
For any measurable functions $f(x)$ defined on $\R^n$ with respect to Dunkl measure $dw$, we also define the modular 
$$
\rho_{p(\cdot)}(f)=\int_{\R^n}|f(x)|^{p(x)}dw(x).
$$

\begin{definition}
For any $p(\cdot)\in \mathcal{P}$, the Dunkl--Lebesgue space with variable exponents $L^{p(\cdot)}(\R^n,dw)$
is defined to be the set of all measurable functions $f(x)$ defined on $\R^n$ with respect to Dunkl measure $dw$ such that the Luxemburg-Nakano norms
$$\|f\|_{L^{p(\cdot)}(\R^n, dw)}=\inf\left\{\lambda>0:\rho_{p(\cdot)}\bigg(\frac{1}{\lambda}f(x)\bigg)\le1 \right\}<\infty.$$
\end{definition}

We also recall some remarkable results on Lebesgue space with variable exponents in \cite{CC, HHP}, which can be applied to the Dunkl setting and will be used in our proofs of the main results.

By $M_Df$ we denote the Dunkl--Hardy--Littlewood maximal function of $f$, i.e.
$$
M_Df(x):=\sup_{t>0}\frac{1}{w(B(x,t))}\int_{B(x,t)}|f(y)|dw(y).
$$
We denote that $\widetilde M_Df(x):=\sum_{\sigma\in G}M_Df(\sigma(x))$.
By $\mathcal B$ we denote the set of bounded exponents $p(x)\in \mathcal P$ such that the function $M_D$ is bounded on $L^{p(\cdot)}(\R^n, dw)$.
An important subset of $\mathcal{B}$ is the well-known $LH$
condition.
A variable exponent function $p(\cdot) \in \mathcal{P}$ is said to be locally log-H\"{o}lder continuous in $\mathbb{R}^n$,
if there exists a constant $c_{\log }(p)>0$ such that for any $x, y \in {\R^n}$,
$$
|p(x)-p(y)| \leq \frac{c_{\log }(p)}{\log (\e+1/{\|x-y\|})},
$$
and is said to satisfy the log-H\"{o}lder decay condition if
there exist a $p_\infty\in\R$ and a constant $c_\infty(p)>0$ such that for any $x\in\R^n$,
$$
|p(x)-p_\infty| \leq \frac{c_\infty(p)}{\log (e+\|x\|)}.
$$
A variable exponent function $p(\cdot) \in \mathcal{P}$ is said to satisfy the globally log-H\"{o}lder continuous condition,
 denoted by $p(\cdot)\in LH$,
 if $p(\cdot)$ satisfies both the locally log-H\"{o}lder continuous condition and the log-H\"{o}lder decay condition.
We remark that, for the unbounded (quasi)metric measure space case, the above $LH$ definitions were first proved in \cite[Lemma 2.1]{AHH2015}.
Thus, as a consequence of \cite[Corollary 1.8]{AHH2015}, if the variable exponent function $p(\cdot)\in LH\cap \mathcal P$,
then the Dunkl--Hardy--Littlewood maximal function $M_Df$ of $f\in L^{p(\cdot)}(\R^n, dw)$
is bounded on $L^{p(\cdot)}(\R^n)$.
By using \cite[Theorem 1.1]{K}, if $p(\cdot)\in\mathcal B$, then the Dunkl--Hardy--Littlewood maximal function $M_Df$
is also bounded on $L^{p'(\cdot)}(\R^n)$, where $\frac{1}{p(x)}+\frac{1}{p'(x)}=1$ for any $x\in\mathbb R^n$, i. e. $p'(\cdot)\in \mathcal B$. 
The known Fefferman--Stein vector-valued inequality on spaces of homogeneous type $X=(\mathbb R^n, dw)$ in \cite[Theorem 2.7]{ZSY16} is also needed in our proofs.
\begin{lemma}\label{FS}
Let $p(\cdot)\in \mathcal P\cap LH$ and $u\in(1,\infty)$. Then for any measurable functions sequence $\{f_i\}_{i=1}^\infty\subset L^{p(\cdot)}(\mathbb R^n, dw)$,
\begin{equation*}
\left\|\left\{\sum_{i=1}^\infty[M_D(f_i)]^u\right\}^{\frac{1}{u}}\right\|_{L^{p(\cdot)}(\mathbb R^n, dw)}\le
C\left\|\left(\sum_{i=1}^\infty|f_i|^u\right)^{\frac{1}{u}}\right\|_{L^{p(\cdot)}(\mathbb R^n, dw)}.
\end{equation*}
\end{lemma}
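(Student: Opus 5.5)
The plan is to deduce the vector-valued inequality from a weighted scalar-type estimate by Rubio de Francia extrapolation in the variable-exponent setting. The only facts about $p(\cdot)$ I will use are those recorded above: if $p(\cdot)\in\mathcal P\cap LH$ then $M_D$ is bounded on $L^{p(\cdot)}(\mathbb R^n,dw)$ by \cite{AHH2015}, and also on the dual exponent space by \cite{K}. Throughout, $(\mathbb R^n,\|\cdot\|,dw)$ is a space of homogeneous type and $M_D$ is its centred maximal operator, so the classical weighted theory on doubling spaces applies. I will write
$$F:=\Big(\sum_i [M_D f_i]^u\Big)^{1/u},\qquad G:=\Big(\sum_i |f_i|^u\Big)^{1/u}.$$

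\emph{Step 1 (weighted estimate).} The first goal is: for every $1<q<\infty$ and every Muckenhoupt weight $v\in A_q(dw)$,
$$\|F\|_{L^q(v\,dw)}\le C\,\|G\|_{L^q(v\,dw)}.$$
I start from the Fefferman--Stein inequality on doubling spaces,
$$\int (M_Df)^u v\,dw\lesssim \int |f|^u M_Dv\,dw,$$
which is proved by a Vitali covering argument for the weak $(1,1)$ bound together with interpolation against the trivial $L^\infty$ bound. For $v\in A_1$ this gives $\int (M_Df)^u v\,dw\lesssim\int|f|^u v\,dw$. Summing over $i$ gives the weighted vector-valued bound at exponent $q=u$ for all $A_1$ weights. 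Classical Rubio de Francia extrapolation on spaces of homogeneous type then upgrades this to all $q\in(1,\infty)$ and all $v\in A_q$. That extrapolation uses only the doubling property of $dw$ and the reverse H\"older property of $A_p$ weights, both of which are available here.

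\emph{Step 2 (extrapolation to $L^{p(\cdot)}$).} Fix $1<p_0<p^-$. Then $r(\cdot):=p(\cdot)/p_0\in\mathcal P\cap LH$, so $r'(\cdot)\in\mathcal P\cap LH$ and $M_D$ is bounded on $L^{r'(\cdot)}(\mathbb R^n,dw)$ with some norm $B$. For $0\le h\in L^{r'(\cdot)}$ I define
$$\mathcal Rh:=\sum_{k\ge0}\frac{M_D^kh}{(2B)^k}.$$
This satisfies $h\le\mathcal Rh$, $\|\mathcal Rh\|_{L^{r'(\cdot)}}\le2\|h\|_{L^{r'(\cdot)}}$, and $M_D(\mathcal Rh)\le 2B\,\mathcal Rh$. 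The last property says $\mathcal Rh\in A_1\subset A_{p_0}$ with constant independent of $h$. By the norm-conjugate formula in variable Lebesgue spaces,
$$\|F\|_{L^{p(\cdot)}}^{p_0}=\|F^{p_0}\|_{L^{r(\cdot)}}\sim\sup_{\|h\|_{L^{r'(\cdot)}}\le1}\int F^{p_0}h\,dw.$$
For each such $h$, Step 1 with $q=p_0$ and the variable H\"older inequality give
$$\int F^{p_0}h\,dw\le\int F^{p_0}\mathcal Rh\,dw\lesssim\int G^{p_0}\mathcal Rh\,dw\lesssim\|G^{p_0}\|_{L^{r(\cdot)}}\|\mathcal Rh\|_{L^{r'(\cdot)}}\lesssim\|G\|_{L^{p(\cdot)}}^{p_0}.$$
This is the desired inequality.

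\emph{Step 3 (justification).} The delicate point is that Step 1 is applied with the weight $\mathcal Rh$, and for this the quantity $\int F^{p_0}\mathcal Rh\,dw$ must be known to be finite, or the argument must avoid needing it. I would first reduce to finitely many $f_i$. For a finite family, each $f_i\in L^{p(\cdot)}$ and $M_D$ is bounded on $L^{p(\cdot)}$, so $F\in L^{p(\cdot)}$ and all the integrals above are finite by variable H\"older. The constants do not depend on the number of terms, so monotone convergence in the number of summands gives the full statement.

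I expect the main obstacle to be Step 1, specifically checking that the extrapolation theorem (from $A_1$ at exponent $u$ to $A_{p_0}$ at an exponent $p_0$ that may be smaller than $u$) holds verbatim on $(\mathbb R^n,\|\cdot\|,dw)$. If that turns out to be awkward, I would bypass it by proving the weighted $A_{p_0}$ vector-valued bound directly, using dyadic cubes of Christ type and a sparse domination of $M_D$.
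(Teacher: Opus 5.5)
\textbf{Step 1 has a genuine gap.} You flagged this step yourself as the likely obstacle, and it does fail as stated.

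What you actually establish there is the $\ell^u$-valued bound at the single exponent $q=u$, and only for $v\in A_1$; it comes from $\int (M_Df)^u v\,dw\lesssim\int |f|^u M_Dv\,dw$. Rubio de Francia's extrapolation theorem needs the estimate at some $p_0$ for \emph{all} $v\in A_{p_0}$. From an $A_1$-only hypothesis at exponent $u$ one can only extrapolate upward, to $q>u$ with $v\in A_{q/u}$.

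Going below $u$ is false as a general principle, not merely hard to verify on spaces of homogeneous type. Take $1<s<u$ and the operator $f\mapsto (M_D|f|^s)^{1/s}$. It satisfies $\int (M_D|f|^s)^{u/s}v\,dw\lesssim\int|f|^uM_Dv\,dw$ for every weight $v$, hence the $A_1$ bound at $u$. Yet it is unbounded on $L^q(dw)$ for $q\le s$: test on $\chi_{B(0,1)}$ and use $\omega(B(0,R))\sim R^N$.

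This breaks your argument because Step 2 applies Step 1 with the $A_1$ weight $\mathcal Rh$ at an exponent $p_0<p^-$. The lemma allows $u\ge p^-$, and then necessarily $p_0<u$. As written, your proof covers only the case $u<p^-$, where you may simply take $p_0=u$.

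\textbf{Repair.} Start from the right weighted input. By Muckenhoupt's theorem on the doubling space $(\mathbb R^n,\|\cdot\|,dw)$, $M_D$ is bounded on $L^u(v\,dw)$ for every $v\in A_u(dw)$, with constant depending on $[v]_{A_u}$; this is where reverse H\"older is really used. Summing over $i$ gives the $\ell^u$-valued bound at exponent $u$ for all $A_u$ weights. Genuine Rubio de Francia extrapolation then gives it at exponent $p_0$ for all $v\in A_{p_0}\supset A_1$; this is the Andersen--John weighted Fefferman--Stein inequality. Alternatively, handle $q<u$ through the $\ell^u$-valued Calder\'on--Zygmund decomposition, or through your sparse-domination fallback.

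With Step 1 corrected, Steps 2 and 3 are sound:
\begin{itemize}
\item $r'(\cdot)\in\mathcal P\cap LH$.
\item The iteration $\mathcal Rh$ produces $A_1$ weights with uniformly bounded characteristic.
\item The duality and H\"older chain is correct.
\end{itemize}

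For comparison, the paper gives no argument for this lemma; it simply quotes \cite[Theorem 2.7]{ZSY16}. Your repaired proof would be a self-contained extrapolation argument in the style of Cruz-Uribe--Fiorenza--Martell--P\'erez. It needs only classical weighted theory on doubling spaces and the boundedness of $M_D$ on $L^{(p(\cdot)/p_0)'}(dw)$ from \cite{AHH2015}.
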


\subsection{Littlewood--Paley theory and Hardy space on space of homogeneous type}
In this subsection, we recall the Littlewood--Paley theory and Hardy space on space of homogeneous type $(\mathbb R^n, \| \cdot\|, d\omega)$ in the sense of Coifman and Weiss. We begin with the following definition of the test functions in space of homogeneous type $(\mathbb R^n,\|\cdot\|,d\omega):$

    \begin{definition}\label{test} A function $f(x)$ defined on $\mathbb R^n$ is said to be a test function if there exits a constant $C$ such that for $0<\beta\leqslant 1, \gamma>0, r>0$ and $x_0\in \mathbb R^n,$
        \begin{enumerate}
            \item[(i)] $|f(x)|\leqslant \frac C{V(x, r+\|x-x_0\|)}\big(\frac{r}{r+\|x-x_0\|}\big)^\gamma;$
            \item[(ii)] $\displaystyle |f(x)-f(x')|\leqslant C\big(\frac{\|x-x'\|}{r+\|x-x_0\|}\big)^\beta\frac{1}{V(x,r+\|x-x_0\|)}\big(\frac{r}{r+\|x-x_0\|}\big)^\gamma, \\ \quad for \|x-x'\|\leqslant \frac{1}{2}(r+\|x-x_0\|);$
            \item[(iii)] $\displaystyle \int_{\R^N} f(x)d\omega(x)=0.$
        \end{enumerate}

        We denote such a test function by $f\in \mathcal M(\beta,\gamma,r,x_0)$ and $\|f\|_{\mathcal M(\beta,\gamma,r,x_0)}$, the norm in $\mathcal M(\beta,\gamma,r,x_0),$ is defined by the smallest $C$
        satisfying the above conditions (i) and (ii).
    \end{definition}

    Applying Coifman's approximation to the identity and decomposition of the identity operator together with the Calder\'on--Zygmund operator theory,
    the discrete Calder\'on reproducing formula in space of homogeneous type is given by the following

    \begin{theorem}\label{dCRh}
        Let $\{S_k\}_{k\in \Bbb Z}$ be a Coifman's approximation to the identity and set
        ${D}_k =: {S}_k - {S}_{k-1}$. Then there exist two families of operators
        $\{\widetilde{D}_k\}_{k\in \Bbb Z}$ and $\widetilde{\widetilde{D}}_k$ such that for any fixed $x_{Q}\in Q$ with $Q\in {\mathcal Q}^k$ with $k\in \Bbb Z$ and ${\mathcal Q}^k$ are "$r-$ dyadic cubes"
        with the side length $r^{-M-k},$
        $$f(x)=\sum\limits_{k=-\infty}^\infty\sum\limits_{Q\in {\mathcal Q}^k}\omega(Q){\widetilde D}_k(x,x_{Q})D_k(f)(x_{Q})=\sum\limits_{k=-\infty}^\infty\sum\limits_{Q\in {\mathcal Q}^k}\omega(Q)D_k(x,x_{Q})
        \widetilde{\widetilde{D}}_k(f)(x_{Q}),$$
        where the series converge in $L^p(\omega), 1<p<\infty, \mathcal M(\beta,\gamma,r,x_0),$ and in $(\mathcal M(\beta,\gamma,r,x_0))^\prime,$ the dual of in $\mathcal M(\beta,\gamma,r,x_0).$
        Moreover, the kernels of the operators $\widetilde{D}_k$
        satisfy the following conditions: for $0<\varepsilon\leqslant 1,$
        \begin{enumerate}
            \item[(i)] $|\widetilde{D}_k(x,y)|\leqslant C\frac1{V_k(x)+V_k(y)+V(x,y)}\Big(\frac{r^{-k}}{r^{-k}+\|x-y\|}\Big)^\varepsilon;$
            \item[(ii)] $|\widetilde{D}_k(x,y)-\widetilde{D}_k(x',y)|\leqslant C\Big(\frac{\|x-x'\|}{r^{-k}+\|x-x'\|}\Big)^\varepsilon\frac1{V_k(x)+V_k(y)+V(x,y)}\Big(\frac{r^{-k}}{r^{-k}+\|x-y\|}\Big)^\varepsilon$,
            \item[]for $\|x-x'\|\leqslant (r^{-k}+\|x-y\|)/2;$
            \item[(iii)] $\displaystyle \int_{\Bbb R^N} \widetilde{D}_k(x,y)d\omega(x)=0\qquad \text{for all}\ y\in \Bbb R^n;$
            \item[(iv)] $\displaystyle \int_{\Bbb R^N} \widetilde{D}_k(x,y)d\omega(y)=0\qquad \text{for all}\ x\in \Bbb R^n.$
            \end{enumerate}
And the kernels of $\widetilde{\widetilde{D}}_k(x,y)$ satisfy the
conditions \rm{(iii)--(iv)} and \rm{(i)--(ii)} with $x$ and $y$
interchanged.
    \end{theorem}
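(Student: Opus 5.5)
This is the discrete Calderón reproducing formula on the space of homogeneous type $(\mathbb R^n,\|\cdot\|,d\omega)$. The plan is to follow the Coifman-type construction already described above for the Dunkl wavelet decomposition (Theorem~\ref{th1.1}), but with $\psi_t,q_t$ replaced by Coifman's approximation to the identity $\{S_k\}$ and $D_k=S_k-S_{k-1}$. We use that $\omega$ is doubling, and that $S_k(x,y)$ is supported in $\|x-y\|\lesssim r^{-k}$, is Lipschitz in both variables, and satisfies $\int S_k(x,y)\,d\omega(y)=\int S_k(x,y)\,d\omega(x)=1$.

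\textbf{Step 1: the continuous identity.} First write $I=\sum_k D_k$ in $L^2(\omega)$. Then set $D_k^N=\sum_{|l|\le N}D_{k+l}$ and split
$$I=\sum_k D_k^N D_k+\sum_k(D_k-D_k^N)D_k=:T_N+R_N.$$
The almost-orthogonality estimates $\|D_jD_k\|_{2\to2}\lesssim r^{-|j-k|\varepsilon'}$ and the Cotlar–Stein lemma give $\|R_N\|_{2,2}\lesssim r^{-N\delta}$. Moreover, $R_N$ is a Calderón–Zygmund operator on the space of homogeneous type whose kernel constant is small, so $R_N$ is small on $L^p$ ($1<p<\infty$) and on $\mathcal M(\beta,\gamma,r,x_0)$.

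\textbf{Step 2: discretize.} Replace $D_k^N D_k$ by
$$\sum_{Q\in\mathcal Q^k}\omega(Q)D_k^N(x,x_Q)D_k f(x_Q),$$
using the $r$-dyadic cubes of side $r^{-M-k}$. The error $\sum_k\sum_Q\int_Q[D_k^N(x,y)D_kf(y)-D_k^N(x,x_Q)D_kf(x_Q)]\,d\omega(y)$ is again a Calderón–Zygmund operator. By the smoothness of both kernels its norm on $L^2$, on $L^p$, and on the test space is $\lesssim r^{-M\varepsilon}C_N$. Choosing first $N$ and then $M$ large makes $I=T_{N,M}+R$ with $\|R\|<1$ on $L^p$ and on $\mathcal M(\beta,\gamma,r,x_0)$. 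Hence $T_{N,M}^{-1}=\sum_m R^m$ is bounded there. Setting
$$\widetilde D_k(x,x_Q)=T_{N,M}^{-1}\big(D_k^N(\cdot,x_Q)\big)(x)$$
gives the first formula. The second formula comes from applying the same argument to $\sum_k D_kD_k^N$, discretizing the right factor, and putting $\widetilde{\widetilde D}_k=D_k^N T_{N,M}^{-1}$ (or by duality). Convergence in $(\mathcal M)'$ then follows by duality from convergence in $\mathcal M$.

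\textbf{Step 3: kernel estimates, the main obstacle.} We must show that $\widetilde D_k(\cdot,y)$ satisfies (i)–(iv).

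Properties (iii) and (iv) are the easier part. $T_{N,M}^{-1}$ preserves the cancellation $\int\cdot\,d\omega=0$, since $R$ and its adjoint do (because $T_{N,M}1=1$ in the appropriate sense). Together with $\int D_k^N=0$ in each variable, this gives (iii) and (iv).

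For (i) and (ii), the key lemma is that $R$ maps $\mathcal M(\beta,\gamma,r^{-k},y)$ to itself with norm $\le C r^{-M\delta}<1$, uniformly in $k$ and $y$. Since $D_k^N(\cdot,y)\in\mathcal M(\beta,\gamma,r^{-k},y)$ with norm $\lesssim C_N$, the Neumann series then yields the size and smoothness bounds (i) and (ii) with exponent $\varepsilon<\beta,\gamma$.

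Proving that $R$ preserves the test space is the technical core. We split the integrals into the regions $\|x-z\|\le\frac12(r^{-k}+\|x-y\|)$ and its complement. Doubling, in the form $V(x,\lambda s)\lesssim\lambda^NV(x,s)$, controls the tails, and the vanishing moments of the Calderón–Zygmund kernel handle the near region, as in Han's original argument. We expect to lose a small amount of $\beta$ and $\gamma$, which is why (i)–(ii) are stated with a possibly smaller $\varepsilon$.
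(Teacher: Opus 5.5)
Your outline is the standard Han construction (Coifman's decomposition of the identity, discretization at scale $r^{-M-k}$, and Neumann-series inversion of $T_{N,M}$ on $L^p$ and on $\mathcal M(\beta,\gamma,r,x_0)$ using small Calder\'on--Zygmund remainders), which is exactly what the paper invokes by citation (Coifman's decomposition of the identity plus Calder\'on--Zygmund theory, following Han--M\"uller--Yang) rather than proving, so it is correct and follows the same route. One slip to fix: the remainder must be $R_N=\sum_k(I-D_k^N)D_k=\sum_k\sum_{|l|>N}D_{k+l}D_k$, not $\sum_k(D_k-D_k^N)D_k$ (which would assert $\sum_k D_k^2=I$); the cancellation $R$ needs on $\mathcal M$ is best read off directly from the kernels of $R_N$ and of the discretization error, which integrate to zero in each variable.
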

The discrete Calder\'on reproducing formula in Theorem \ref{dCRh} leads the
    following discrete square function on space of homogeneous type
    $(\Bbb R^n, \|\cdot\|, d\omega).$
    \begin{definition}\label{scw}
        Suppose that $f\in (\mathcal M(\beta,\gamma,r,x_0))^\prime.$ $S_{cw}(f),$ the Littlewood--Paley square function of $f$ on space of homogeneous type $(\mathbb R^n, \|\cdot\|, d\omega),$ is defined by
        $$S_{cw}(f)(x)=\bigg\{\sum\limits_{k=-\infty}^\infty\sum\limits_{Q\in {\mathcal Q}^k}
        |D_kf(x_{Q})|^2\chi_{Q}(x)\bigg\}^{1/2},$$
        where $D_k, {\mathcal Q}^k$ are same as given in Theorem \ref{dCRh}. See \cite{HMY} for more details.
    \end{definition}

\section {\bf{Littlewood--Paley characterization and atomic decomposition characterization}}
In this section, we will obtain the Littlewood--Paley characterizations for  variable Dunkl--Lebesgue spaces and present the atomic decomposition characterization for variable Dunkl--Hardy spaces with the help of the Littlewood--Paley theory and Hardy space on space of homogeneous type. First we give the proof of Theorem \ref{LP} via the theory of vector-valued Dunkl--Calder\'on--Zygmund operator.

\begin{proof}[\bf Proof of Theorem \ref{LP}]\quad
We write
    \begin{align*}
S(f)(x)&=\left(\sum\limits_{Q}\left|q_{Q}f(x_{Q})\right|^2\chi_Q(x)\right)^{\frac{1}{2}}
    =\left(\sum\limits_{Q}\left|\int_{\mathbb R^n}
    \chi_Q(x)q_{Q}(x_Q,y)f(y)d\omega(y)\right|^2\right)^{\frac{1}{2}}.
    \end{align*}
Then we can introduce the $H-$valued function $\{f_Q(x)\},$ where $Q$ are all $r$-dyadic cubes in $\mathbb R^n$. The norm of $\{f_Q(x)\}$ is defined by
    $$\|f_Q(x)\|_{H}:= \Big(\sum\limits_{Q}|f_Q(x)|^2\Big)^{\frac{1}{2}}.$$
Then let $T$ be an $H-$valued operator defined
    by $$T(f)(x)=\bigg\{\int_{\mathbb R^n} K(x,y)(f)(y)d\omega(y) \bigg\},$$
    where $K(x,y)=\{\chi_Q(x)q_{Q}(x_Q,y)\}$
From the proof of \cite[Theorem 1.11]{THHLL2} we know that $T$ is an $L^2-$bounded $H-$valued operator and the kernel $K(x,y)$ satisfies the following condition: for
    some $0<\varepsilon\leqslant 1,$
    \begin{equation*}\label{smooth y4}
    \|K(x,y)-K(x,y')\|_{H}\lesssim \Big(\frac{\|y-y'\|}{\|x-y\|}\Big)^\varepsilon\frac{1}{\omega(B(x,d(x,y)))} \qquad {\rm for}\ \|y-y'\|\leqslant  d(x,y)/2;
    \end{equation*}
Thus, repeating the same argument as in the proof of \cite[Theorem 1.5]{TT}, for any $p(\cdot)\in \mathcal P\cap LH$ we obtain that the Dunkl-Calder\'on-Zygmund oprator
$T$ is bounded from $L^{p(\cdot)}(\mathbb R^n,d\omega)$ to $L^{p(\cdot)}(H)$, 
which yields that the estimate $\|S(f)\|_{L^{p(\cdot)}(\mathbb R^n,d\omega)}\leqslant C\|f\|_{L^{p(\cdot)}(\mathbb R^n,d\omega)}$ for any $p(\cdot)\in \mathcal P\cap LH$.
On the other hand, by using the duality argument we get the estimate $\|f\|_{L^{p(\cdot)}(\mathbb R^n,d\omega)}\leqslant C\|S(f)\|_{L^{p(\cdot)}(\mathbb R^n,d\omega)}$ for any $p(\cdot)\in \mathcal P\cap LH$. 
Therefore, we have completed the proof of Theorem \ref{LP}.
\end{proof}

Before we establish the atomic decomposition characterization for variable Dunkl--Hardy space $H_D^{p(\cdot)}(\mathbb R^n,\omega)$, we first give
the relationship between the variable Dunkl--Hardy space $H_D^{p(\cdot)}(\mathbb R^n,d\omega)$ and the variable Hardy space $H_{cw}^{p(\cdot)}(\mathbb R^n, d\omega)$ on space of homogeneous type in the sense of Coifman and Weiss. Now we recall the definition of $H^{p(\cdot)}_{cw}(\mathbb R^n, d\omega)$ as follows.
\begin{definition}\label{hpcw}
For $p(\cdot)\in\mathcal P^0$,  the space $H^{p(\cdot)}_{cw}(\mathbb R^n,d\omega)$ is the collection of all distributions $f\in (\mathcal M(\beta,\gamma,r,x_0))^\prime$ such that
    $\|f\|_{H^{p(\cdot)}_{cw}(\mathbb R^n,d\omega)}=\|S_{cw}(f)\|_{L^{p(\cdot)}(\mathbb R^n,dw)}<\infty,$ where the square function $S_{cw}(f)$ is defined in the Definition \ref{scw}.
\end{definition}
The relationship between the Dunkl--Hardy space $H_D^{p(\cdot)}(\mathbb R^n)$ and Hardy space $H^{p(\cdot)}_{cw}(\mathbb R^n,d\omega)$ is given by the following

\begin{theorem}\label{relation}
Let $p(\cdot)\in\mathcal P^0\cap LH$ with $\frac{{ N}}{{ N}+1}<p^-\le p^+<\infty.$ Then for any $f\in H_D^{p(\cdot)}{(\mathbb R^n)}$ we have
$$\|f\|_{H_D^{p(\cdot)}{(\mathbb R^n)}}\sim \|f\|_{H^{p(\cdot)}_{cw}(\mathbb R^n,d\omega)}.$$
\end{theorem}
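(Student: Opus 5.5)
The plan is to prove the two inequalities $\|S_{cw}(f)\|_{L^{p(\cdot)}}\lesssim\|S(f)\|_{L^{p(\cdot)}}$ and $\|S(f)\|_{L^{p(\cdot)}}\lesssim\|S_{cw}(f)\|_{L^{p(\cdot)}}$ first for $f\in L^2(\mathbb R^n,d\omega)\cap H_D^{p(\cdot)}(\mathbb R^n)$, and then pass to the completion by density. Both directions use the same scheme. We insert a discrete reproducing formula for $f$, estimate the almost-orthogonality of the two families of kernels, and bound the resulting discrete sums by a maximal function. The Fefferman--Stein inequality, Lemma~\ref{FS}, then finishes the argument.

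For $\|S_{cw}(f)\|\lesssim\|S(f)\|$, we write $f$ by the weak-type discrete Calder\'on reproducing formula in the Remark following Definition~\ref{df1.3}:
\[
f=\sum_{j}\sum_{Q\in\mathcal Q^j}\omega(Q)\psi_Q(\cdot,x_Q)\,q_Qh(x_Q),
\]
with $\|h\|_{H_D^{p(\cdot)}}\sim\|f\|_{H_D^{p(\cdot)}}$. Applying $D_k$ at $x_{Q'}$, $Q'\in\mathcal Q^k$, gives
\[
D_kf(x_{Q'})=\sum_j\sum_{Q\in\mathcal Q^j}\omega(Q)\,D_k\psi_j(x_{Q'},x_Q)\,q_Qh(x_Q).
\]
The key kernel estimate uses the cancellation of $D_k$ and of $\psi_j$, the smoothness of both, and the fact that $\psi\in C_0^\infty(B_d(0,1/4))$ makes $\psi_j(\cdot,y)$ a test function in $\mathcal M(\beta,\gamma,r^{-j},y)$ in the Euclidean metric after summing over $G$-orbits. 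From these one should get
\[
|D_k\psi_j(x,y)|\lesssim r^{-|k-j|\varepsilon'}\frac{1}{V_{k\wedge j}(x)+V(x,y)}\Big(\frac{r^{-(k\wedge j)}}{r^{-(k\wedge j)}+d(x,y)}\Big)^{\gamma}.
\]
A standard lemma on spaces of homogeneous type, valid for $\frac{N}{N+\gamma}<\theta\le1$, then gives
\[
\sum_{Q\in\mathcal Q^j}\omega(Q)\frac{1}{V_{k\wedge j}+V}\Big(\cdots\Big)^\gamma|q_Qh(x_Q)|\lesssim r^{(N/\theta-N)(j-k)_+}\Big\{\widetilde M_D\Big(\sum_{Q\in\mathcal Q^j}|q_Qh(x_Q)|^\theta\chi_Q\Big)\Big\}^{1/\theta}.
\]
We choose $\theta<p_-$ with $\theta>N/(N+1)$. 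Summing in $j$ by Cauchy--Schwarz against the geometric factor, and applying Lemma~\ref{FS} in $L^{p(\cdot)/\theta}$ with $u=2/\theta$, yields $\|S_{cw}(f)\|_{L^{p(\cdot)}}\lesssim\|S(h)\|_{L^{p(\cdot)}}\sim\|f\|_{H_D^{p(\cdot)}}$. Because $p(\cdot)$ is $G$-invariant and $\widetilde M_D$ is a finite sum of $M_D\circ\sigma$, the Dunkl-metric orbit terms cost only a factor $|G|$. Here $p(\cdot)/\theta\in\mathcal P\cap LH$.

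The reverse direction is symmetric. We expand $f$ by Theorem~\ref{dCRh}, $f=\sum_k\sum_{Q'}\omega(Q')\widetilde D_k(\cdot,x_{Q'})D_kf(x_{Q'})$, which converges in $L^2$ for $f\in L^2$. We then apply $q_j$ at $x_Q$ and need the analogous almost-orthogonality bound for $q_j\widetilde D_k(x,y)$. Here $q_t$ is the Dunkl--Poisson derivative kernel, whose size and smoothness bounds involve $d(x,y)$ in the decay and $\|x-y\|$ in the smoothness (as in \cite{THHLL1}). Its cancellation $\int q_t(x,y)\,d\omega(y)=0$ together with the cancellation and regularity of $\widetilde D_k$ gives the decay $r^{-|j-k|\varepsilon'}$. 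The same maximal-function and Fefferman--Stein argument then gives $\|S(f)\|\lesssim\|S_{cw}(f)\|$.

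I expect the main obstacle to be these mixed almost-orthogonality estimates, since one kernel lives in the Dunkl metric $d$ and the other in the Euclidean metric. The decay exponent available for $q_t$, roughly first order, is what forces $p^->N/(N+1)$. I would handle this by splitting each integral into the regions $\|x-y\|$ small and large relative to the larger scale, and using $d\le\|\cdot\|$ together with $\omega(B(x,r))\sim\omega(B_d(x,r))$ to convert Euclidean-metric decay into Dunkl-metric decay. The final density step is routine: both quantities are controlled on the dense class $L^2\cap H_D^{p(\cdot)}$, so the equivalence extends to the completion.
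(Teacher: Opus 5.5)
Your direction $\|S(f)\|_{L^{p(\cdot)}}\lesssim\|S_{cw}(f)\|_{L^{p(\cdot)}}$ is fine and is essentially the paper's argument. You expand $f$ by Theorem~\ref{dCRh}, prove a mixed almost-orthogonality bound for $q_j$ against the Coifman--Weiss kernel, and finish with the discrete maximal-function estimate and Lemma~\ref{FS}. Using $\widetilde D_k$ with coefficients $D_kf(x_{Q'})$ lands directly on $S_{cw}(f)$.

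The gap is in the harder direction, which rests entirely on the bound $\|S(h)\|_{L^{p(\cdot)}}\lesssim\|S(f)\|_{L^{p(\cdot)}}$ from the Remark after Definition~\ref{df1.3}. First, that Remark is only asserted for $p^+\le 1$, whereas the theorem allows $p^+<\infty$. More seriously, it is not an independent input. Since $h=T_M^{-1}f=\sum_{m\ge0}(R_1+R_M)^mf$, the bound says that $R_1+R_M$ contracts the $S$-quasi-norm. To estimate $S((R_1+R_M)g)$ by $S(g)$ you would need a discrete reproducing formula whose coefficients are the $q_Qg(x_Q)$, and that is exactly what the Remark is supposed to provide. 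The only available route is through $H^{p(\cdot)}_{cw}$, namely $\|S(h)\|\lesssim\|S_{cw}(h)\|\lesssim\|S_{cw}(f)\|\lesssim\|S(f)\|$. The last of these inequalities is the one you are trying to prove, so as written this half of your argument is circular.

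The missing idea, which is what the paper does, is to apply your $D_k\psi_j$ almost-orthogonality estimate to $T_Mf=-\ln r\sum_{k'}\sum_{Q'\in\mathcal Q^{k'}}\omega(Q')\psi_{Q'}(\cdot,x_{Q'})\,q_{Q'}f(x_{Q'})$ rather than to the $h$-expansion. The coefficients of $T_Mf$ are the $S$-coefficients of $f$ itself, so you get $\|S_{cw}(T_Mf)\|\lesssim\|S(f)\|$ with no $h$ at all. The remainder $f-T_Mf=(R_1+R_M)f$ is then controlled in the $S_{cw}$ quasi-norm, where Theorem~\ref{dCRh} \emph{does} supply a reproducing formula. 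The almost-orthogonality bound for $D_kR_iD_{k'}$, together with $\|R_1\|_{dcz}\lesssim r-1$ and $\|R_M\|_{dcz}\lesssim r^{-M}$, gives $\|S_{cw}(R_ig)\|\le\frac14\|S_{cw}(g)\|$ for every $g\in L^2$. The paper absorbs this term, passing through finite approximants $f_n$ so that the absorbed quantity is finite. Alternatively, since $\|R_1+R_M\|_{L^2\to L^2}<1$, you can write $f=\sum_{m\ge0}(R_1+R_M)^mT_Mf$ in $L^2$. Then the $p_-$-triangle inequality gives $\|S_{cw}(f)\|^{p_-}\le\sum_m\|S_{cw}((R_1+R_M)^mT_Mf)\|^{p_-}\le\sum_m c^m\|S_{cw}(T_Mf)\|^{p_-}$ with $c<1$, and this needs no a priori finiteness at all.
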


\begin{proof}
 First, we prove that for each $f\in L^2(\mathbb R^n,d\omega),  \|S(f)\|_{L^{p(\cdot)}(\mathbb R^n,d\omega)}\leqslant C \|S_{cw}(f)\|_{L^{p(\cdot)}(\mathbb R^n,d\omega)}$. 
By Theorem \ref{dCRh}, we have
    $f(x)=\sum\limits_{k=-\infty}^\infty\sum\limits_{Q\in {\mathcal Q}^k}\omega(Q)D_k(x,x_{Q})\widetilde{\widetilde{D}}_k(f)(x_{Q})$.
 Then we get that
    $$q_{Q'}f(x_{Q'})=\sum\limits_{k=-\infty}^\infty\sum\limits_{Q\in {\mathcal Q}^k}\omega(Q)q_{Q'}D_k(x_{Q'},x_{Q})\widetilde{\widetilde{D}}_k(f)(x_{Q}).$$
     This gives
    \begin{align*}
    S(f)(x)
    =\Big(\sum\limits_{k'=-\infty}^\infty\sum\limits_{Q'\in {\mathcal Q}^{k'}}\big\lvert\sum\limits_{k=-\infty}^\infty\sum\limits_{Q\in {\mathcal Q}^k}\omega(Q)q_{Q'}D_k(x_{Q'},x_{Q})
    \widetilde{\widetilde{D}}_k(f)(x_{Q})\big\rvert^2\chi_{Q'}(x)\Big)^{\frac{1}{2}}.
    \end{align*}

To prove it, we need the following claim.
        Let $k, k'\in\mathbb Z,r>1$ and any $x_{Q}\in Q$ and $x_{Q'}\in Q'$ with $Q\in {\mathcal Q}^k$ and $Q'\in {\mathcal Q}^{k'},$ respectively. Suppose that
        $$|S_{k',k}(x_{Q'},x_{Q})|\leqslant Cr^{-|k-k'|\varepsilon} \frac1{V(x_{Q'},x_{Q},r^{-k\vee -k'}+d(x_{Q'},x_{Q}))}
        \bigg(\frac{r^{-k \vee -k'}}{r^{-k \vee -k'} +d(x_{Q'},x_{Q})}\bigg)^{\varepsilon},$$
        for some $0<\varepsilon\leqslant 1.$ Then for $p(\cdot)\in\mathcal P^0\cap LH$ with $\frac{{ N}}{{N}+1}<p^-\le p^+<\infty$,
        \begin{align*}
        &\left\|\bigg(\sum\limits_{k'=-\infty}^\infty\sum\limits_{Q'\in {\mathcal Q}^{k'}}
        \Big|\sum\limits_{k=-\infty}^\infty\sum\limits_{Q\in {\mathcal Q}^k} \omega(Q)S_{k',k}(x_{Q'},x_Q)\lambda_Q\Big|^2\chi_{Q'}\bigg)^{\frac{1}{2}}\right\|_{L^{p(\cdot)}(\mathbb R^n,dw)} \\
        &\lesssim
        \left\| \bigg(\sum\limits_{k=-\infty}^\infty\sum\limits_{Q\in {\mathcal Q}^k} |\lambda_Q|^2\chi_{Q}\bigg)^{\frac{1}{2}}\right\|_{L^{p(\cdot)}(\mathbb R^n,dw)}.
        \end{align*}

From \cite[Lemma 2.13]{THHLL1} we know that for $\varepsilon\in (0,1)$
    \begin{align*}
    &|q_{Q'}D_k(x_{Q'},x_{Q})|   \lesssim r^{-|k-k'|\varepsilon} \frac1{V(x_{Q'},x_{Q},r^{-k\vee -k'}+d(x_{Q'},x_{Q})}
    \bigg(\frac{r^{-k \vee -k'}}{r^{-k \vee -k'} +d(x_{Q'},x_{Q})}\bigg)^{\varepsilon}.
    \end{align*}
    
By applying the above claim with $S_{k',k}(x_{Q'},x_{Q})=q_{Q'}D_k(x_{Q'},x_{Q})$ and $\lambda_Q= \widetilde{\widetilde{D}}_k(f)(x_{Q}),$ respectively, 
we have proved that $\|S(f)\|_{L^{p(\cdot)}(\mathbb R^n,d\omega)}\leqslant C \|S_{cw}(f)\|_{L^{p(\cdot)}(\mathbb R^n,d\omega)}$.
    
We now return to the proof of the claim. 
Repeating the similar argument in \cite[Section 4]{THHLL2}, we have
        \begin{align*}
        &\sum\limits_{k'=-\infty}^\infty\sum\limits_{Q'\in {\mathcal Q}^{k'}}
        \Big|\sum\limits_{k=-\infty}^\infty\sum\limits_{Q\in {\mathcal Q}^k} \omega(Q)S_{k',k}(x_{Q'},x_Q)\lambda_Q\Big|^2\chi_{Q'} \\
        &\lesssim \sum\limits_{\sigma\in G}\sum\limits_{k=-\infty}^\infty
        \bigg\{M\Big(\sum\limits_{Q\in {\mathcal Q}^k}|\lambda_Q|^\theta\chi_{Q}\Big)(\sigma(x))\bigg\}^{2/\theta},
        \end{align*}
        where $\sup\limits _{k\in \Bbb Z}\sum\limits_{k'=-\infty}^\infty r^{-|k-k'|\varepsilon}r^{[-k-(-k'\vee -k)]{ N}(1-\frac 1\theta)}<\infty$ for $\frac{N}{{ N}+\varepsilon}<\theta<p_-\leqslant 1.$
Finally, by Lemma \ref{FS}, we conclude that
        \begin{align*}
        &\left\| \bigg(\sum\limits_{k'=-\infty}^\infty\sum\limits_{Q'\in {\mathcal Q}^{k'}}
        \Big|\sum\limits_{k=-\infty}^\infty\sum\limits_{Q\in {\mathcal Q}^k} \omega(Q)S_{k,k'}(x_{Q'},x_Q)\lambda_Q\Big|^2\chi_{Q'}\bigg)^{\frac{1}{2}}\right\|_{L^{p(\cdot)}(\mathbb R^n,dw)}  \\
        \lesssim& \sum\limits_{\sigma\in G}
        \left\|\Big(\sum\limits_{k=-\infty}^\infty\sum\limits_{Q\in {\mathcal Q}^k}|\lambda_Q|^2\chi_Q(\sigma(x))\Big)^{\frac{1}{2}}\right\|_{L^{p(\cdot)}(\mathbb R^n,dw)}\lesssim
        \left\|\Big(\sum\limits_{k=-\infty}^\infty\sum\limits_{Q\in {\mathcal Q}^k}|\lambda_Q|^2\chi_Q(x)\Big)^{\frac{1}{2}}\right\|_{L^{p(\cdot)}(\mathbb R^n,dw)}.
        \end{align*}
 So the claim is proved.

Next we need to prove $\|S_{cw}(f)\|_{L^{p(\cdot)}(\mathbb R^n,dw)}\leqslant C\|S_{cw}(f)\|_{L^{p(\cdot)}(\mathbb R^n,dw)}$ for $f\in L^2(\mathbb R^n, dw)$.
To show it, we write
    \begin{align*}
    f(x)=T_M(f)(x)+R_1(f)(x)+R_M(x).
    \end{align*}
Then for $p_-\le 1$ we have
\begin{align*}
&\|S_{cw}(f)\|^{p_-}_{L^{p(\cdot)}(\mathbb R^n,dw)}\leqslant \|S_{cw}\big(T_M(f)(x)+R_1(f)(x)+R_M(x)\big)\|^{p_-}_{L^{p(\cdot)}(\mathbb R^n,dw)}\\
&\leqslant \|S_{cw}(T_Mf)\|^{p_-}_{L^{p(\cdot)}(\mathbb R^n,dw)} +\|S_{cw}(R_1(f))\|^{p_-}_{L^{p(\cdot)}(\mathbb R^n,dw)}
+\|S_{cw}(R_M(f))\|^{p_-}_{L^{p(\cdot)}(\mathbb R^n,dw)}.
\end{align*}

Now we show that
$$\|S_{cw}(R_1(f)\|_{L^{p(\cdot)}(\mathbb R^n,dw)}\leqslant C \|R_1\|_{dcz}  \|S_{cw}(f)\|_{L^{p(\cdot)}(\mathbb R^n,dw)}.$$
In fact,  by Theorem \ref{dCRh}, we obtain
    $$\|S_{cw}(R_1(f)\|_p=
    \bigg\|\bigg\{\sum\limits_{k=-\infty}^\infty\sum\limits_{Q\in {\mathcal Q}^k}
    |\sum\limits_{k'=-\infty}^\infty\sum\limits_{Q'\in {\mathcal Q}^{k'}}\omega(Q')D_kR_1D_{k'}(x_Q,x_{Q'})\widetilde{\widetilde{D}}_{k'}(f)(x_{Q'})|^2\chi_{Q'}(x)\bigg\}^{1/2}\bigg\|_p.$$ 
Applying \cite[Lemma 2.8]{THHLL1} to 
$D_kR_1D_{k'}(x_Q,x_{Q'})$ 
yields that for $\varepsilon\in (0,1)$ and any $x\in Q'$ with $Q'\in {\mathcal Q}^{k'},$
    \begin{align*}
    &|D_kR_1D_{k'}(x_Q,x_{Q'})| \\
    \lesssim& \|R_1\|_{dcz} r^{-|k-k'|\varepsilon} \frac1{V(x_{Q'},x_{Q},r^{-k\vee -k'}+d(x_{Q'},x_{Q}))}
    \bigg(\frac{r^{-k \vee -k'}}{r^{-k \vee -k'} +d(x_{Q'},x_{Q})}\bigg)^{\varepsilon}\chi_{Q'}(x).
    \end{align*}
Repeating the identical proof above together with the claim imply
    $$\|S_{cw}R_1(f)\|_{L^{p(\cdot)}(\mathbb R^n,dw)}
    \lesssim \|R_1\|_{dcz}\|S_{cw}(f)\|_{L^{p(\cdot)}(\mathbb R^n,dw)}.$$
    Similarly, we also can obtain 
 $$\|S_{cw}R_M(f)\|_{L^{p(\cdot)}(\mathbb R^n,dw)}
    \lesssim \|R_M\|_{dcz}\|S_{cw}(f)\|_{L^{p(\cdot)}(\mathbb R^n,dw)}.$$

Moreover, we get that
$$\|S_{cw}(R_1(f)\|_{L^{p(\cdot)}(\mathbb R^n,dw)}\leqslant 1/4 \|S_{cw}(f)\|_{L^{p(\cdot)}(\mathbb R^n,dw)}$$
and
$$\|S_{cw}(R_M(f)\|_{L^{p(\cdot)}(\mathbb R^n,dw)}\leqslant 1/4 \|S_{cw}(f)\|_{L^{p(\cdot)}(\mathbb R^n,dw)}.$$

Indeed, in \cite[Section 4]{THHLL2} we know that $\|R_1\|_{dcz}\leqslant C (r-1)$ for some $1<r\leqslant r_0$ with $r_0$ is closed 1
and $\|R_M\|_{dcz}\leqslant C r^{-M}.$ Therefore, for each $f\in L^2(\mathbb R^N, d\omega)$ we immediately obtain that $\|R_1\|_{dcz}\le 1/4$ 
and $\|R_M\|_{dcz}\le 1/4$.

Besides these estimates, we obtain that for $\frac { N}{{ N}+1}<p\leqslant 1,$
    $$\|S_{cw}(f)\|_{L^{p(\cdot)}(\mathbb R^n,dw)}
    \leqslant \|S_{cw}(T_Mf)\|_{L^{p(\cdot)}(\mathbb R^n,dw)} +\frac{1}{2}\|S_{cw}(f)\|_{L^{p(\cdot)}(\mathbb R^n,dw)}.$$
    
Set
    $$f_n(x)=\sum\limits_{|j|\le n}\sum\limits_{Q\in {\mathcal Q}^j\cap B(0,n)}w(Q)
    \psi_{Q}(x,x_{Q})q_{Q}h(x_{Q}).$$
    Note that $f_n$ coverges to $f$ in the $L^2(\mathbb R^N,\omega).$

Observe that
    $$\|S_{cw}(f_n)\|_{L^{p(\cdot)}(\mathbb R^n,dw)}\leqslant { 2} \|S_{cw}(T_Mf_n)\|_{L^{p(\cdot)}(\mathbb R^n,dw)}$$
    since $f_n$ has finite terms only.

Meanwhile, for $f\in L^2(\mathbb R^n, d\omega), \|S_{cw}(T_Mf)\|_{L^{p(\cdot)}(\mathbb R^n,dw)}\leqslant C \|S(f)\|_{L^{p(\cdot)}(\mathbb R^n,dw)}.$ 
Indeed, noting that
    $$T_M(f)(x)= -\ln r \cdot\sum\limits_{k'=-\infty}^\infty\sum\limits_{Q'\in {\mathcal Q}^{k'}}w(Q') \psi_{Q'}(x,x_{Q'})q_{Q'}f(x_{Q'}),$$
    and then
    \begin{align*}
    \bigg|D_kT_M(f)(x)\bigg|   \lesssim\sum\limits_{k'=-\infty}^\infty\sum\limits_{Q'\in {\mathcal Q}^{k'}}w(Q')|D_k \psi_{Q'}(x,x_{Q'})| |q_{Q'}f(x_{Q'})|.
    \end{align*}
By \cite[Lemma 2.8]{THHLL1} for $D_k \psi_{Q'}(x,x_{Q'})$  and then repeating the same proof above together with the claim we obtain
    \begin{align*}
    \|S_{cw}(T_Mf)\|_{L^{p(\cdot)}(\mathbb R^n,dw)}\lesssim \|S(f)\|_{L^{p(\cdot)}(\mathbb R^n,dw)}.
    \end{align*}
    Thus,
    $$\|S_{cw}(f_n)\|^{p_-}_{L^{p(\cdot)}(\mathbb R^n,dw)}\leqslant { 2} \|S_{cw}(T_Mf_n)\|^{p_-}_{L^{p(\cdot)}(\mathbb R^n,dw)}\leqslant C\|S(f_n)\|^{p_-}_{L^{p(\cdot)}(\mathbb R^n,dw)},$$
    which when $n$ tends to $\infty$ gives
    $$\|S_{cw}(f)\|^{p_-}_{L^{p(\cdot)}(\mathbb R^n,dw)}\leqslant C\|S(f)\|^{p_-}_{L^{p(\cdot)}(\mathbb R^n,dw)}.$$
Therefore, we have completed the proof of this theorem.
\end{proof}

From the proof of Theorem \ref{relation}, we know that the variable Dunkl--Hardy spaces is well defined. In the last of this section, we can show the proof of Theorem \ref{atom} by using the above theorem and the atomic decomposition for variable Hardy spaces  $H_{cw}^{p(\cdot)}(\mathbb R^n, d\omega)$.

\begin{proof}[\bf Proof of Theorem \ref{atom}]\quad
First, we know that the Dunkl setting, $(\mathbb R^n, \|\cdot\|, dw)$, is space of homogeneous type in the sense of Coifman and Wiess with the measure $dw$ satisfing the doubling and the reverse doubling properties.
Then, the atomic decomposition for variable Hardy spaces  $H_{cw}^{p(\cdot)}(\mathbb R^n, d\omega)$ on spaces of homogeneous type have been established in \cite[Theorem 4.3]{Tan2023} and \cite[Theorem 4.11]{YHYY}.
Thus, the atomic decomposition for $H_D^{p(\cdot)}{(\mathbb R^n)}$ follows from the atomic decomposition for  $H_{cw}^{p(\cdot)}{(\mathbb R^n)}$ and Theorem \ref{relation}. 
\end{proof}

\section{\bf{Boundedness of Dunkl--Calder\'on--Zygmund Operators}}
In this section, by applying atomic decomposition theory we will obtain the boundedness of Dunkl--Calder\'on--Zygmund operators on variable Dunkl--Hardy spaces.
Before we prove Theorem \ref{bdhp}, we first recall the following generalized Grafakos--Kalton lemma in \cite{Tan2023} which is
needed for our proofs.
Also see \cite[Section 4]{CMN} for more detail on the Euclidean case.
\begin{lemma}\cite[Lemma 4.4]{Tan2023}\label{GL}\quad
Given $p(\cdot)\in \mathcal P_0\cap LH$.
Fix $q>1$. Suppose that $0<p^+<q$, then
for given countable collections of cubes $\{Q_j\}_{j=1}^\infty$ and of nonnegative measurable functions $\{g_j\}_{j=1}^\infty$ such that
${\rm supp}(g_j)\subset Q_j,$
$$
\left\|\sum_{j=1}^\infty
g_j\right\|_{L^{p(\cdot)}(\mathbb R^n, d\omega)}
\le C\left\|\sum_{j=1}^\infty\left(\frac{1}{\omega(Q_j)}
\int_{Q_j}g_j^q\right)^{\frac{1}{q}}\chi_{Q_j}\right\|_{L^{p(\cdot)}(\mathbb R^n, d\omega)}.
$$
\end{lemma}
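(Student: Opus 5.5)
This lemma is quoted from \cite[Lemma 4.4]{Tan2023}, so it could simply be cited. To prove it directly in the Dunkl setting, I would follow the Euclidean argument of \cite[Section 4]{CMN}, adapted to the space of homogeneous type $(\mathbb R^n,\|\cdot\|,d\omega)$.

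\textbf{Step 1: rescaling.} Choose $0<\theta<p^-$ small. Using $\||f|^\theta\|_{L^{p(\cdot)/\theta}}=\|f\|_{L^{p(\cdot)}}^\theta$, the claim is equivalent to a statement for the exponent $s(\cdot)=p(\cdot)/\theta$. This exponent satisfies $s^->1$, and $s(\cdot)\in LH$. Consequently $M_D$ is bounded on $L^{s(\cdot)}(\mathbb R^n,d\omega)$, and also on $L^{s'(\cdot)}$ by \cite[Corollary 1.8]{AHH2015} together with \cite[Theorem 1.1]{K}.

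\textbf{Step 2: duality.} We need to bound $\|\sum_j g_j\|_{L^{p(\cdot)}}$. When $p^-\ge1$, we pair $\sum_j g_j$ against a nonnegative $h$ with $\|h\|_{L^{p'(\cdot)}}\le1$. Since $g_j$ is supported in $Q_j$, H\"older's inequality on each cube gives
\[
\int g_j h\,d\omega\le \omega(Q_j)\Big(\frac1{\omega(Q_j)}\int_{Q_j}g_j^q\Big)^{1/q}\Big(\frac1{\omega(Q_j)}\int_{Q_j}h^{q'}\Big)^{1/q'}.
\]
The last average is at most $\inf_{Q_j}M_D(h^{q'})^{1/q'}$. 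Summing over $j$ then gives
\[
\int\sum_j g_jh\,d\omega\le\int \sum_j \Big(\frac1{\omega(Q_j)}\int_{Q_j}g_j^q\Big)^{1/q}\chi_{Q_j}\,M_D(h^{q'})^{1/q'}\,d\omega .
\]
By generalized H\"older, it remains to show $\|M_D(h^{q'})^{1/q'}\|_{L^{p'(\cdot)}}\lesssim1$. This holds once $M_D$ is bounded on $L^{p'(\cdot)/q'}$, i.e.\ once $(p'/q')^->1$, which is exactly the condition $p^+<q$.

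\textbf{Step 3: the case $p^-<1$.} Duality fails here, so I would apply Step 2 to the rescaled objects. Take $r<p^-$ and work with $s=p/r$. The delicate point is that $(\sum_j g_j)^r\le\sum_j g_j^r$, and each $g_j^r$ has $L^{q/r}$ averages controlled by those of $g_j$. So the functions $g_j^r$ satisfy the hypotheses with $q/r>p^+/r=s^+$ and $s^->1$, and the Step 2 estimate applies. One then returns from $\|\sum_j(\mathrm{avg})^r\chi_{Q_j}\|_{L^{s}}$ to $\|(\sum_j(\mathrm{avg})\chi_{Q_j})^r\|_{L^{s}}$. That passage uses the reverse comparison $\sum_j a_j^r\chi_{Q_j}$ versus $(\sum_j a_j\chi_{Q_j})^r$, and this is where a genuine obstacle arises: for overlapping cubes the reverse inequality fails.

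\textbf{Step 4: handling the obstacle.} In \cite{CMN} the gap is closed because the right-hand sides are constants on cubes. First reduce to nested dyadic cubes (Christ cubes, using doubling of $\omega$). Then compare the two expressions using the Fefferman--Stein inequality, Lemma \ref{FS}, with $u=1/r$ applied to $\chi_{Q_j}\le (M_D\chi_{\frac12 Q_j})^{\cdots}$-type bounds. I expect this rescaling-and-comparison step to be the main technical hurdle. The remaining steps are routine transfers of the Euclidean proof, since they only use doubling and the boundedness of $M_D$.
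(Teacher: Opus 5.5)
\textbf{There is a genuine gap: the case $p^-<1$ is not proved.} The paper itself gives no proof here; it only cites \cite[Lemma 4.4]{Tan2023}.

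Your Step 2 is correct for $p^-\ge 1$. Hölder on each $Q_j$, the bound $\frac{1}{\omega(Q_j)}\int_{Q_j}h^{q'}\,d\omega\lesssim\inf_{Q_j}M_D(h^{q'})$, and the boundedness of $M_D$ on $L^{p'(\cdot)/q'}$ (this is where $p^+<q$ enters) do prove the lemma in that range. But $p^-<1$ is exactly the case the paper needs, since Theorem~\ref{bdhp} allows $p^-$ down to $N/(N+\varepsilon)$.

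The loss happens at the first move of Step 3. Once you use $(\sum_j g_j)^r\le\sum_j g_j^r$, the most Step 2 can give is
\[
\Big\|\sum_j g_j\Big\|_{L^{p(\cdot)}(\mathbb R^n,d\omega)}\lesssim\Big\|\Big(\sum_j a_j^r\chi_{Q_j}\Big)^{1/r}\Big\|_{L^{p(\cdot)}(\mathbb R^n,d\omega)},\qquad a_j:=\Big(\frac{1}{\omega(Q_j)}\int_{Q_j}g_j^q\,d\omega\Big)^{1/q}.
\]
This does not imply the lemma, even for dyadic cubes. Take a cube $Q_0$, let $\{Q_j\}$ be all dyadic subcubes of $Q_0$ of generations $0,1,\dots,m$, and set $g_j=\chi_{Q_j}$, so $a_j=1$. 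These cubes are distinct, and any two are nested or disjoint. Almost every point of $Q_0$ lies in exactly $m+1$ of them. So the right-hand side above is $(m+1)^{1/r}\|\chi_{Q_0}\|_{L^{p(\cdot)}}$, while the lemma's right-hand side is $(m+1)\|\chi_{Q_0}\|_{L^{p(\cdot)}}$. The ratio $(m+1)^{1/r-1}$ is unbounded.

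Hence the return from the $\ell^r$-sum to the $\ell^1$-sum in Step 4 is false, not merely technical. Reducing to dyadic cubes does not help, and neither does Lemma~\ref{FS}. That lemma compares $\ell^u$-sums with $\ell^u$-sums of the same index $u$; this is how it moves between $Q_j$ and its dilates. It never turns an $\ell^r$-sum with $r<1$ into an $\ell^1$-sum.

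The missing idea is to stay linear in $\sum_j g_j$ and let weights carry the exponent. Suppose $V$ satisfies $\big(\frac{1}{\omega(Q)}\int_Q V^{q'}\,d\omega\big)^{1/q'}\le K\,\frac{1}{\omega(Q)}\int_Q V\,d\omega$ on all cubes. Then Hölder on each $Q_j$ gives, with $F=\sum_j g_j$ and $G=\sum_j a_j\chi_{Q_j}$,
\[
\int_{\mathbb R^n}F\,V\,d\omega\le K\int_{\mathbb R^n}G\,V\,d\omega,
\]
and here the $\ell^1$ structure survives.

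For a constant exponent $0<s<1$, write $\int F^s\,d\omega\le(\int FW\,d\omega)^s(\int W^{-s/(1-s)}\,d\omega)^{1-s}$ with $W=(M_D(G^t))^{-(1-s)/t}$ and $0<t<s$. Then:
\begin{itemize}
\item $W$ is a negative power of an $A_1$ weight, so $\sup_Q W\lesssim\frac{1}{\omega(Q)}\int_Q W\,d\omega$, and the linear estimate applies.
\item $W\le G^{-(1-s)}$, so $\int GW\,d\omega\le\int G^s\,d\omega$.
\item $\int W^{-s/(1-s)}\,d\omega=\int M_D(G^t)^{s/t}\,d\omega\lesssim\int G^s\,d\omega$.
\end{itemize}

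For a variable $p(\cdot)$, possibly straddling $1$, this must go through Rubio de Francia extrapolation of limited range. The linear estimate for $V\in A_\infty\cap RH_{q'}$ extrapolates to $\int F^s w\,d\omega\lesssim\int G^s w\,d\omega$ for all $w\in A_\infty\cap RH_{(q/s)'}$. Then, for $s<p^-$, you dualize $L^{p(\cdot)/s}$ and take $w=\big(\mathcal R(h^{(q/s)'})\big)^{1/(q/s)'}$, where $\mathcal R$ is the Rubio de Francia operator on $L^{(p(\cdot)/s)'/(q/s)'}$. The lower exponent of that space exceeds $1$ exactly when $p^+<q$. This weighted, extrapolation-based mechanism is what your outline lacks, and it is what makes $p^+<q$ the right hypothesis when $p^-<1$.
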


Now we give the proof of Theorem \ref{bdhp}.
\begin{proof}[{\bf The proof of Theorem \ref{bdhp}}]
Fix $1<q<\infty$. Observe that $L^q(\mathbb R^n,d\omega)\cap H_D^{p(\cdot)}(\mathbb R^n)$ is dense in $H_D^{p(\cdot)}(\mathbb R^n,d\omega).$ 
 By Theorem \ref{atom}, if $f\in L^q(\mathbb R^n, d\omega)\cap H_D^{p(\cdot)}(\mathbb R^n)$, 
 then for $p(\cdot)\in LH, \frac{{N}}{{N}+1}<p^-\le p^+<\infty$, there is a
sequence of $(p(\cdot),q)-$atoms $\{a_j\}$ and a sequence of non-negative scalars $\{\lambda_j\}$
with
\begin{equation*}
  \mathcal{A}(\{\lambda_j\}_{j=1}^\infty,\{Q_j\}_{j=1}^\infty)
  \leq C\|f\|_{H_D^{p(\cdot)}(\mathbb R^n)},
\end{equation*}
{\noindent}such that $f=\sum_j\lambda_ja_j$, where
\begin{equation*}
  \mathcal{A}(\{\lambda_j\}_{j=1}^\infty,\{Q_j\}_{j=1}^\infty)
  =\left\|\sum_{j}
  \frac{\lambda_j\chi_{Q_j}}
  {\|\chi_{Q_j}\|_{L^{p(\cdot)}}}
\right\|_{L^{p(\cdot)}(\mathbb R^n, d\omega)}
  <\infty.
\end{equation*}
and the series converges to
$f$ in both $L^2(\mathbb R^n, d\omega)$ and $H_D^{p(\cdot)}(\mathbb R^n)$ norms.
It follows that
$$
T(f)(x)=\sum_j\lambda_jT(a_j)(x),
$$
in the sense of $L^q(\mathbb R^n, d\omega)$ and $H_D^{p(\cdot)}(\mathbb R^n)$ norms.
Let $\mbox{supp}\,a_j(x)\subseteq Q_j$ and 
$$B^j_d=\{x: d(x,x_{Q_j})\leqslant 4{\sqrt n}\l(Q_j) \},$$ 
where $x_{Q_j}$ is the center of $Q_j$ and $\ell(Q_j)$ is the side length of $Q_j.$ 
Write
    $$\left\|\sum_j\lambda_jT(a_j)\right\|_{L^{p(\cdot)}(\mathbb R^n, d\omega)}
    =\left\|\sum_j\lambda_jT(a_j)\chi_{B_d^j}\right\|_{L^{p(\cdot)}(\mathbb R^n, d\omega)}
    +\left\|\sum_j\lambda_jT(a_j)\chi_{{(B_d^j)}^c}\right\|_{L^{p(\cdot)}(\mathbb R^n, d\omega)}.$$
Now we estimate the first term. From Theorem \ref{bdlp},
we conclude that $$
\|T(a_j)\|_{L^q(\mathbb R^n, d\omega)}
\le \|a_j\|_{L^q(\mathbb R^n, d\omega)}
\le \omega({Q_j})^{\frac{1}{q}}\|\chi_{Q_j}\|^{-1}_{L^{p(\cdot)}(\mathbb R^n, d\omega)}.
$$
Applying Lemma \ref{GL}, we get that
\begin{align*}
\left\|\sum_j\lambda_jT(a_j)\chi_{B_d^j}\right\|_{L^{p(\cdot)}(\mathbb R^n, d\omega)}
\le C\left\|\sum_j\frac{\lambda_j\chi_{B_d^j}}{\|\chi_{Q_j}\|_{L^{p(\cdot)}}}\right\|_{L^{p(\cdot)(\mathbb R^n, d\omega)}(\mathbb R^n, d\omega)}
\le C\mathcal{A}(\{\lambda_j\}_{j=1}^\infty,\{Q_j\}_{j=1}^\infty),
\end{align*}    
where the last inequality comes from the relationship between $B_d$ and $B$ and the Fefferman--Stein vector valued
maximal inequality in Lemma \ref{FS}.

To estimate the second term, observe that if $x\in {(B_d^j)}^c$ and $y\in Q_j,$ then $\|y-x_{Q_j}\|\leqslant \frac{1}{2}d(x,x_{Q_j})$. Applying the cancellation condition of $a$ and the smoothness condition of the kernel $K(x,y)$ yield that
\begin{align*}
 |T(a_j)(x)|&
 =\bigg|\int_{Q_j} K(x,y)a_j(y)d\omega(y)\bigg|=\bigg|\int_{Q_j} [K(x,y)-K(x,x_{Q_j})]a_j(y)d\omega(y)\bigg|\\
 \leqslant& C\Big(\frac{\l(Q_j)}{\|x-x_{Q_j}\|}\Big)^\varepsilon \frac{1}{\omega(B(x, d(x,x_{Q_j})))}\|a_j\|_{L^1(\mathbb R^n, d\omega)}\\
 \leqslant& C\Big(\frac{\l(Q_j)}{\|x-x_{Q_j}\|}\Big)^\varepsilon \frac{\omega{(Q_j)}}{\omega(B(x, d(x,x_{Q_j})))}
 \frac{1}{\|\chi_{Q_j}\|_{L^{p(\cdot)}(\mathbb R^n, d\omega)}}.
\end{align*}

We claim that
\begin{align}\label{claim22}
\Big(\frac{\l(Q_j)}{\|x-x_{Q_j}\|}\Big)^\varepsilon \frac{\omega{(Q_j)}}{\omega(B(x, d(x,x_{Q_j})))}
\le C\left(\widetilde{M}_D\chi_{{Q_j}}(x)\right)^{\frac{N+\varepsilon}{N}}.
\end{align}

Write $\tau=\frac{N+\varepsilon}{N}$. Since $p^->\frac{N}{N+\varepsilon}$, then $\tau p^->1$. 
Thus, by using Lemma \ref{FS} we have
\begin{align*}
&\left\|\sum_j\lambda_jT(a_j)\chi_{{(B_d^j)}^c}\right\|_{L^{p(\cdot)}(\mathbb R^n, d\omega)}\\
&\le C\left\|\sum_j\lambda_j\Big(\frac{\l(Q_j)}{\|\cdot-x_{Q_j}\|}\Big)^\varepsilon \frac{\omega{(Q_j)}}{\omega(B(\cdot, d(\cdot,x_{Q_j})))}
 \frac{1}{\|\chi_{Q_j}\|_{L^{p(\cdot)}(\mathbb R^n, d\omega)}}
 \chi_{{(B_d^j)}^c}\right\|_{L^{p(\cdot)}(\mathbb R^n, d\omega)}\\
&\le C\left\|\sum_j\frac{\lambda_j \widetilde M_D^{\tau}\chi_{{Q_j}}}{\|\chi_{Q_j}\|_{L^{p(\cdot)}(\mathbb R^n, d\omega)}}
\right\|_{L^{p(\cdot)}(\mathbb R^n, d\omega)}\\
&= C\left\|\left\{\sum_j\frac{\lambda_j \widetilde M_D^{\tau}\chi_{{Q_j}}}{\|\chi_{Q_j}\|_{L^{p(\cdot)}(\mathbb R^n, d\omega)}}
\right\}^{\frac{1}{\tau}}\right\|^\tau_{L^{p(\cdot)\tau}(\mathbb R^n, d\omega)}\\
&\le C\left\|\sum_j\frac{\lambda_j\chi_{{Q_j}}}{\|\chi_{Q_j}\|_{L^{p(\cdot)}(\mathbb R^n, d\omega)}}
\right\|_{L^{p(\cdot)}(\mathbb R^n, d\omega)}\\
&=\mathcal{A}(\{\lambda_j\}_{j=1}^\infty,\{Q_j\}_{j=1}^\infty).
\end{align*}

Finally, we only need to prove Claim \ref{claim22}.
Indeed, let
\[
R=d(x,x_{Q_j}).
\]
Since $x\in (B_d^j)^c$ and
\[
B_d^j=\{x:d(x,x_{Q_j})\le 4\sqrt n\,\l(Q_j)\},
\]
we have
\[
R\gtrsim \l(Q_j).
\]

By the standard estimate for the Dunkl Hardy--Littlewood maximal operator,
\[
\frac{\omega(Q_j)}
{\omega(B(x,R))}
\lesssim \widetilde M_D(\chi_{Q_j})(x),
\qquad x\in (B_d^j)^c
.
\]
Hence,
\[
\Big(
\frac{\omega(Q_j)}
{\omega(B(x,R))}
\Big)^{1+\frac{\varepsilon}{N}}
\lesssim \Big(\widetilde M_D(\chi_{Q_j})(x)\Big)^{\frac{N+\varepsilon}{N}}.
\]

On the other hand, 
from the fact that $\omega(B(x,r))\sim \omega(B(y,r))$ when $d(x,y)\sim r,$ 
 we know that $\omega(B(x,R))\sim \omega(B(x_{Q_j},R))$ as $R=d(x,x_{Q_j})$.

Moreover, by using the fact that,
for all $x\in\mathbb{R}^n$ and $0<r_1<r_2$, 
\[
\frac{\omega(B(x,r_2))}{\omega(B(x,r_1))}
\lesssim
\left(\frac{r_2}{r_1}\right)^N,
\]
and that $R>l(Q_j)>0$,
then we obtain
\[
\frac{\omega(B(x_{Q_j},R))}
{\omega(B(x_{Q_j},l(Q_j))}
\lesssim
\Big(\frac{R}{l(Q_j)}\Big)^N.
\]

Therefore,
\[
\Big(
\frac{\omega(Q_j)}
{\omega(B(x,R))}
\Big)^{\varepsilon/N}
\sim
\Big(
\frac{\omega(B(x_{Q_j},l(Q_j)))}
{\omega(B(x_{Q_j},R))}
\Big)^{\varepsilon/N}
\gtrsim
\Big(
\frac{\l(Q_j)}R
\Big)^\varepsilon.
\]

Consequently,
\[
\Big(
\frac{\l(Q_j)}R
\Big)^\varepsilon
\frac{\omega(Q_j)}
{\omega(x,R)}
\lesssim
\Big(
\frac{\omega(Q_j)}
{\omega(B(x,R))}
\Big)^{1+\varepsilon/N},
\]
which yields \eqref{claim22}.
Therefore, we complete the proof of Theorem \ref{bdhp}.  

 \end{proof}

{\bf Acknowledgments.}




The project is sponsored by
the National Natural Science Foundation of China(Grant No. 11901309), 
and the Open Project
Program of Key Laboratory of Mathematics and Complex System (Grant No. K202502), Beijing Normal University.
The main part of this work was completed during the author's visit to Auburn University. The author would like to thank Auburn University for its hospitality.

\bigskip
\bigskip

\medskip
\noindent Jian Tan\\
\noindent School of Science,\\
Nanjing University of Posts and Telecommunications,\\
Nanjing 210023, People's Republic of China\\

\noindent {\it E-mail address}: \texttt{tj@njupt.edu.cn}(J. Tan)\\


\bigskip

\begin{thebibliography}{99}
\bibitem{AHH2015}
T. Adamowicz, P. Harjulehto, P. H\"{a}st\"{o},
 Maximal operator in variable exponent Lebesgue spaces on unbounded quasimetric measure spaces.
Math. Scand. {\bf116} (2015), no. 1, 5--22.


\bibitem{ADH} J.-Ph. Anker, J. Dziuba\'nski and A. Hejna, Harmonic functions, conjugate harmonic functions and the Hardy $H^1$ in rational Dunkel setting,
J. Fourier Anal. Appl., {\bf25} (2019), no. 5, 2356--2418.


\bibitem{CC} D. Cruz-Uribe and J. Cummings, Weighted norm inequalities for the maximal operator on  $L^{p(\cdot)}$  
over spaces of homogeneous type.
Ann. Fenn. Math. {\bf 47} (2022), no. 1, 457--488.

\bibitem{CF} D. Cruz-Uribe and A. Fiorenza, \textit{Variable Lebesgue spaces:
Foundations and Harmonic Analysis}, Birkh\"{a}user, Basel, 2013.

\bibitem{CMN}  D. Cruz-Uribe, K. Moen and H. V. Nguyen, 
{\it A new approach to norm inequalities on weighted and variable Hardy spaces,} {Ann. Acad. Sci. Fenn. Math.,} 
{\bf 45} (2020), 175--198. 


\bibitem{DHHR2011}
L. Diening, P. Harjulehto, P. H\"{a}st\"{o}, M. R$\mathrm{\mathring{u}}$\v{z}i\v{c}ka,
 Lebesgue and Sobolev spaces with variable exponents.
 Lecture Notes in Mathematics, 2017. Springer, Heidelberg, 2011. x+509 pp. 
 

\bibitem{D2} C.F. Dunkl, Differential-difference operators associated to reflection groups,  Trans. Amer. Math., {\bf 311} (1989), no. 1,
167--183.


\bibitem{HMY} Y. Han, D. M\"{u}ller and D. Yang, A theory
    of Besov and Triebel-Lizorkin spaces on metric measure
    spaces modeled on Carnot--Carath\'eodory spaces, Abstr.
Appl. Anal., Vol.~2008, Article ID 893409. 250~pages.

\bibitem{HHP}  P. Harjulehto, P. H\"ast\"o and M. Pere, Variable exponent Lebesgue spaces on metric spaces: the Hardy--Littlewood maximal operator. Real Anal. Exchange {\bf30} (2004), no. 1, 87--103.


\bibitem{K} A. Y. Karlovich, Hardy--Littlewood maximal operator on reflexive variable Lebesgue spaces over spaces of homogeneous type. Studia Math. {\bf 254} (2020), 149--178.


\bibitem{THHLL1} C. Tan, Ya Han, Yo. Han, M.-Y. Lee, and J. Li, Criterion of the $L^2$ boundedness in Dunkl Setting and applications, Ann. Sc. Norm. Super. Pisa Cl. Sci. (5) {\bf 26} (2025), no. 1, 259--299.

\bibitem{THHLL2} C. Tan, Ya Han, Yo. Han, M.-Y. Lee, and J. Li, Singular integral operators, $T1$ theorem, Littlewood--Paley theory and Hardy spaces in Dunkl Setting. arXiv preprint arXiv:2204.01886, (2022).

\bibitem{Tan2023} J. Tan, A new approach for Hardy spaces with variable exponents on spaces of homogeneous type,
Filomat {\bf37} (2023), no. 23, 7719--7739.

\bibitem{TT} J. Tan and X. Tao, Dunkl--Calder\'on--Zygmund operators on Dunkl--Lebesgue spaces with variable exponents, Acta Math. Sinica (Chinese Ser.),
{\bf 68} (2025), no. 6.  923--936.

\bibitem{TX1} S. Thangvelu and Y. Xu, Convolution operator and maximal function for the Dunkl transform, J. Anal. Math., {\bf97} (2005), 25--55.

\bibitem{YHYY} X. Yan, Z. He, D. Yang and W. Yuan, Hardy spaces associated with ball quasi-Banach function spaces on spaces of homogeneous type: Littlewood--Paley characterizations with applications to boundedness of Calder\'on--Zygmund operators. Acta Math. Sin. (Engl. Ser.) {\bf 38} (2022), no.7, 1133--1184.

\bibitem{ZSY16} C-Q. Zhuo, Y. Sawano and D-C. Yang,
{Hardy spaces with variable exponents on RD-spaces and applications,} Dissertationes Math.  {\bf 520} (2016), 1--74.


\end{thebibliography}
\end{document}